\documentclass[12pt, a4paper]{amsart}
\usepackage[utf8]{inputenc}
\usepackage[T1]{fontenc}
\usepackage{amsmath, amssymb, amsthm}
\usepackage{geometry}
\usepackage{hyperref}
\newtheorem{theorem}{Theorem}[section]
\newtheorem{lemma}[theorem]{Lemma}
\newtheorem{proposition}[theorem]{Proposition}
\newtheorem{corollary}[theorem]{Corollary}
\newtheorem{problem}[theorem]{Problem}
\theoremstyle{definition}
\newtheorem{definition}[theorem]{Definition}
\newtheorem{remark}[theorem]{Remark}

\theoremstyle{plain}
\newtheorem*{thmA}{Theorem A}
\newtheorem*{thmB}{Theorem B}
\newtheorem*{thmC}{Theorem C}

\newcommand{\Bl}{\mathcal{B}(D,\ell_2)}
\newcommand{\Blz}{\mathcal{B}_{0,c}(D,\ell_2)}
\newcommand{\MS}{\mathcal{M}_{S_1}(2,1)}
\newcommand{\Minf}{\mathcal{M}_{\infty}(2,1)}
\newcommand{\RR}{\mathcal{R}(2,1)}
\newcommand{\CC}{\mathcal{C}(2,1)}
\newcommand{\Mone}{\mathcal{M}_1}
\newcommand{\N}{\mathbb{N}}
\newcommand{\T}{\mathbb{T}}
\newcommand{\Z}{\mathbb{Z}}
\newcommand{\e}{\mathrm{e}}
\newcommand{\lsim}{\lesssim}
\newcommand{\esim}{\eqsim}

\title{Structural Properties of the Köthe Dual of the Matricial Bloch Space}
\author{Liviu-Gabriel Marcoci} 

\address{Department of Mathematics and Computer Science, Technical University of Civil Engineering Bucharest, RO-020396 Bucharest, ROMANIA}
\email{liviu.marcoci@utcb.ro}
\date{\today}

\begin{document}

\begin{abstract}
We study the K\"othe dual $\mathcal{B}(D,\ell_2)^K$ of the matricial
Bloch space. A 2015 conjecture [Publ. Math. Debrecen \textbf{87}
(2015), 351--370] proposed that this space coincides with the dyadic
mixed-norm space determined by the operator norms of the diagonals. We
disprove the conjecture by revealing a structural obstruction:
membership in $\mathcal{B}(D,\ell_2)^K$ is sensitive to the placement
of the entries within the diagonals and cannot be detected from
diagonal data alone; in particular the trace-norm variant fails as
well. However, testing against Toeplitz matrices exactly recovers the
trace-norm variant, a matricial analogue of the Anderson--Shields
theorem, proved via analytic majorants. Finally, we establish
two-sided estimates: row-wise and column-wise $\ell(2,1)$ conditions
are sufficient, while the trace-norm dyadic condition is necessary;
the latter inclusion is strict.
\end{abstract}
\maketitle

\noindent\textbf{2020 MSC:} 15A60, 47B10, 47B35, 30H30, 46A45.

\noindent\textbf{Keywords:} infinite matrices, Schur multipliers,
K\"othe duality, Bloch space, Schatten classes, solid spaces.

\section{Introduction and main results}\label{sec:intro}

A guiding principle of matriceal harmonic analysis, developed
systematically in the monograph of Persson and Popa \cite{PP}, is that
an upper triangular infinite matrix $A=(a_{j,m})_{j,m\ge1}$ may be
regarded as an analogue of an analytic function on the unit disc $D$,
the diagonals $A_k$ ($k\ge0$) playing the role of the Taylor
coefficients. Many classical function spaces admit matricial
analogues, and a recurring theme is to decide which properties of a
matrix are visible from the sequence of its diagonal norms alone.

The scalar model for the problem studied here is the theorem of
Anderson and Shields \cite{AS} on the Bloch space
\[
\mathcal{B}=\{f \text{ analytic on } D:
\|f\|_{\mathcal{B}}:=\sup_{|z|<1}(1-|z|^2)|f'(z)|+|f(0)|<\infty\}.
\]
Regarding analytic functions as sequences of Taylor coefficients, the
K\"othe dual of a sequence space $X$ is
$X^K=(X,\ell_1)$, the space of coefficient multipliers from $X$ into
$\ell_1$. Anderson and Shields proved that
\begin{equation}\label{eq:AS}
\mathcal{B}^K=(\mathcal{B},\ell_1)=\ell(2,1),
\end{equation}
where $\ell(2,1)$ is the dyadic mixed-norm space recalled in
Section~\ref{sec:prelim}; see \cite[Section~3]{AS}, and
\cite{BST,JVA,Pav} for the surrounding scalar theory.

For the infinite matrices $A=(a_{j,m})$ and $B=(b_{j,m})$, the Schur product is defined as
\[
A*B=(a_{j,m}b_{j,m})_{j,m}.
\]
The role of $\ell_1$ is played by the
space $\Mone$ of matrices with absolutely summable entries, and the
K\"othe dual of a space $X$ of upper triangular matrices is
\begin{equation}\label{eq:kothe}
X^K=\Bigl\{A \text{ upper triangular}:\
\sum_{j,m}|a_{j,m}b_{j,m}|<\infty\ \text{ for every }B\in X\Bigr\}.
\end{equation}
 The matricial Bloch space $\Bl$
(Definition~\ref{def:bloch}) was introduced by Popa \cite{Po} and is
studied systematically in \cite{PP}. In \cite{MMPP}, Marcoci, Marcoci,
Persson and Popa characterized the Schur multipliers from the Schatten
classes $S_p$, $1<p<\infty$, into $\Mone$ that is, the K\"othe
duals of the Schatten classes \cite[Theorem~2.3]{MMPP}. They also proved
that the K\"othe duals of the space $\mathcal{I}(\ell_2)$ (studied in
\cite{Po,PP}) and of the little space $\Blz$ recalled in
Section~\ref{sec:prelim} coincide with the largest solid subspaces of
$\Bl$ and of $\mathcal{I}(\ell_2)$, respectively
\cite[Theorem~4.4]{MMPP}. For $\Bl$ itself, the following conjecture,
influenced by the Anderson--Shields identity \eqref{eq:AS}, is
formulated at the end of \cite{MMPP}:
\begin{equation}\label{eq:conj}
\Bl^K\ \overset{?}{=}\ \Minf ,
\end{equation}
where $\Minf$ is the dyadic mixed-norm space determined by the
operator norms of the diagonals (Definition~\ref{def:mixed}; the space
is denoted $\mathcal{M}(2,1)$ in \cite{MMPP}). Since $\Mone$ is the
trace-class analogue of $\ell_1$, a second natural candidate arises by
using instead the trace norms of the diagonals: writing $\Phi(A_k)$
for a norm of the $k$-th diagonal, the candidate spaces are
\[
\Bigl\{A:\ \sum_{n\ge0}\Bigl(\sum_{k\in
I_n}\Phi(A_k)^2\Bigr)^{1/2}<\infty\Bigr\},
\]
with $\Phi$ the operator norm (giving $\Minf$) or the trace norm
(giving $\MS$).

The first purpose of this paper is to show that the conjecture
\eqref{eq:conj} fails, that its trace-norm variant fails as well, and
that the failure is structural: \emph{membership in $\Bl^K$ cannot be
detected from the diagonals at all}. Throughout,
$(I_n)_{n\ge0}$ denotes the dyadic decomposition of $\{0,1,2,\dots\}$
fixed in Section~\ref{sec:prelim}, and a functional $\Phi$ on diagonal
matrices is called \emph{permutation-invariant} if it depends only on
the multiset of the moduli of the entries; every Schatten norm and
every symmetric norm is of this type.

\begin{thmA}[placement dichotomy; Theorem~\ref{thm:dichotomy} and
Corollary~\ref{cor:impossible}]
There exist upper triangular matrices $A^{(1)},A^{(2)}\in\MS$ whose
diagonals coincide up to a permutation of their entries, such that
\[
A^{(2)}\in\Bl^K\qquad\text{and}\qquad A^{(1)}\notin\Bl^K .
\]
Consequently, for every permutation-invariant functional $\Phi$ on
diagonals and every set $S$ of sequences,
\[
\bigl\{A:\ (\Phi(A_k))_{k\ge0}\in S\bigr\}\ \neq\ \Bl^K ;
\]
in particular $\Minf\not\subset\Bl^K$ and $\MS\not\subset\Bl^K$.
\end{thmA}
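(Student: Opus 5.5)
The plan is to exploit the fact that the operator norm of a matrix depends on the \emph{geometry} of its support, while a permutation-invariant $\Phi$ does not see it. I will use matrices with exactly one nonzero entry on each diagonal, so that moving that entry along its diagonal keeps the multiset of moduli of every $A_k$ unchanged. Fix a scalar sequence $a_k\ge 0$ with $(a_k)\in\ell(2,1)\setminus\ell_1$; for instance $a_k=1/k$ for $k\ge1$. Its $n$-th dyadic block has $\ell_2$-norm $\asymp 2^{-n/2}$, which is summable, while $\sum_k a_k=\infty$. Since $\Phi(A_k)=a_k$ for any Schatten norm of a one-entry diagonal, both matrices below lie in $\MS$ (and in $\Minf$). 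I use the description of $\Bl$ from Definition~\ref{def:bloch} through the quantity $\sup_{0<r<1}(1-r^2)\bigl\|\sum_{k\ge1}kA_kr^{k-1}\bigr\|_{B(\ell_2)}$, together with the size of the $0$-th diagonal.

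\emph{The first-row placement $A^{(2)}$.} Put $a_k$ at position $(1,k+1)$. For $B\in\Bl$, compressing to the first row does not increase the operator norm. Hence the first row $(b_{1,k+1})_k$ satisfies
\[
(1-r^2)^2\sum_k k^2|b_{1,k+1}|^2r^{2k-2}\le C .
\]
Taking $r=1-2^{-n}$ and keeping only the terms with $k\in I_n$, where $k\asymp 2^n$ and $r^{2k}\gtrsim1$, gives $\sup_n\bigl(\sum_{k\in I_n}|b_{1,k+1}|^2\bigr)^{1/2}<\infty$. Cauchy--Schwarz on each dyadic block then yields
\[
\sum_k a_k|b_{1,k+1}|\le \sum_n\Bigl(\sum_{I_n}a_k^2\Bigr)^{1/2}\Bigl(\sum_{I_n}|b_{1,k+1}|^2\Bigr)^{1/2}<\infty .
\]
So $A^{(2)}\in\Bl^K$ by \eqref{eq:kothe}.

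\emph{The spread placement $A^{(1)}$.} Put $a_k$ at position $(j_k,j_k+k)$ with $j_k=4^k$. The rows $j_k$ are pairwise distinct, and so are the columns $j_k+k$. Now test against $B$ with entry $b_{j_k,j_k+k}=1$ for all $k$ and zeros elsewhere. Because the support of $\sum_k kr^{k-1}B_k$ has at most one entry in each row and each column, its operator norm equals $\sup_k kr^{k-1}$. Moreover $(1-r^2)\sup_k kr^{k-1}\lesssim1$ uniformly in $r$, so $B\in\Bl$. On the other hand $\sum|a_{j,m}b_{j,m}|=\sum_k a_k=\infty$, hence $A^{(1)}\notin\Bl^K$. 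The step that needs the most care is exactly this norm computation: it is the only place where placement enters, and it requires verifying that the chosen support is a partial-permutation pattern.

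\emph{The corollary.} For any permutation-invariant $\Phi$ we have $\Phi(A^{(1)}_k)=\Phi(A^{(2)}_k)$ for every $k$. Therefore any set of the form $\{A:(\Phi(A_k))_k\in S\}$ contains both matrices or neither, while $\Bl^K$ contains exactly one of them, so the two sets differ. Finally, $A^{(1)}\in\MS\cap\Minf\setminus\Bl^K$, which gives the two non-inclusions $\Minf\not\subset\Bl^K$ and $\MS\not\subset\Bl^K$.
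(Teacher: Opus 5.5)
Your proposal is correct and is essentially the paper's proof. The paper uses the same first-row matrix $A^{(2)}$, handled by the row estimate of Lemma~\ref{lem:coeff}(ii) and blockwise Cauchy--Schwarz, and a partial-permutation test matrix $B$ whose Bloch norm reduces to $(1-r^2)\sup_k kr^{k-1}$. Two small differences: the paper places the entries at $(k,2k)$, so your lacunary rows $4^k$ are not needed, and it explicitly verifies the bound $(1-r^2)\sup_k kr^{k-1}\le 2$ that you only assert.
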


In particular the conjecture \eqref{eq:conj} is disproved;
Proposition~\ref{prop:warmup} records an even simpler counterexample to
\eqref{eq:conj} alone, valid under either of the two dyadic conventions
in use (see Section~\ref{sec:prelim}). The matrices $A^{(1)},A^{(2)}$
of Theorem~A are completely explicit: both have the
entry $1/k$ on the $k$-th diagonal for every $k\ge1$, placed at
position $(k,2k)$ in the first case and at position $(1,1+k)$ in the
second. The failure persists for the little matricial Bloch space
(Remark~\ref{rem:littlebloch}).

The second purpose is to prove positive results bounding $\Bl^K$
from both sides. Let $\RR$ and $\CC$ denote the spaces of upper
triangular matrices whose rows, respectively columns, have summable
$\ell(2,1)$ norms (Definition~\ref{def:rowcol}).

\begin{thmB}[two-sided estimate; Theorem~\ref{thm:sufficient},
Theorem~\ref{thm:necessary} and Corollary~\ref{cor:sandwich}]
\[
\RR+\CC\ \subset\ \Bl^K\ \subset\ \MS ,
\]
with quantitative bounds, and the second inclusion is strict.
Moreover, neither the row condition nor the column condition is
necessary: $\Bl^K\setminus\RR$ and $\Bl^K\setminus\CC$ are both
nonempty.
\end{thmB}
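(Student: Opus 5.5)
The plan has three parts, plus two examples: sufficiency of $\RR+\CC$ by an $\ell(2,1)$–$\ell(2,\infty)$ duality applied row by row and column by column; necessity of $\MS$ by testing against Toeplitz matrices and invoking the Anderson--Shields identity \eqref{eq:AS}; strictness from Theorem~A; and the two non-necessity statements by explicit block examples.

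\emph{Sufficiency.} The key lemma is a uniform row/column estimate for Bloch matrices. For $B\in\Bl$, every row and every column, read along the diagonal index $k$, should lie in $\ell(2,\infty)$ uniformly:
\[
\sup_{j,n}\Bigl(\sum_{k\in I_n}|b_{j,j+k}|^2\Bigr)^{1/2}\lsim\|B\|_{\Bl},
\]
and similarly for columns. To prove it, take $r=1-2^{-n}$. The $j$-th row of $B'(r)=\sum_k kB_kr^{k-1}$ has $\ell_2$ norm at most $\|B'(r)\|\lsim 2^n\|B\|_{\Bl}$. For $k\in I_n$ we have $k\esim 2^n$ and $r^{k-1}\esim1$, so restricting the row to $k\in I_n$ gives the bound. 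The same argument applied to the columns (via the adjoint) gives the column version.

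Given the lemma, if $A\in\RR$ then
\[
\sum_{j,m}|a_{j,m}b_{j,m}|\le\sum_j\sum_n\Bigl(\sum_{k\in I_n}|a_{j,j+k}|^2\Bigr)^{1/2}\Bigl(\sum_{k\in I_n}|b_{j,j+k}|^2\Bigr)^{1/2}\lsim\|A\|_{\RR}\|B\|_{\Bl}.
\]
The same computation over columns handles $\CC$, and solidity of $\Bl^K$ gives the sum $\RR+\CC$.

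\emph{Necessity.} If $A\in\Bl^K$, the closed graph theorem gives $\sum|a_{j,m}b_{j,m}|\le C\|B\|_{\Bl}$. Test this against Toeplitz matrices $B=T_f$ with $b_{j,m}=f_{m-j}$, where $f=\sum f_kz^k$ is analytic. Since $T_{f'}(r)$ is the analytic Toeplitz matrix with symbol $f'(r\cdot)$, its operator norm is $\sup_t|f'(re^{it})|$. Hence $\|T_f\|_{\Bl}\esim\|f\|_{\mathcal B}$. Moreover,
\[
\sum_{j,m}|a_{j,m}f_{m-j}|=\sum_k|f_k|\,\|A_k\|_{S_1}.
\]
So the scalar sequence $(\|A_k\|_{S_1})_k$ is a coefficient multiplier from $\mathcal B$ into $\ell_1$, and by \eqref{eq:AS} it belongs to $\ell(2,1)$ with norm $\lsim C$. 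This is exactly $A\in\MS$. Strictness of $\Bl^K\subset\MS$ is witnessed by $A^{(1)}$ from Theorem~A.

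\emph{Non-necessity.} I expect this to be the only place requiring care, namely choosing the examples. For $\Bl^K\setminus\RR$, fix $n\ge1$ and use the column $m_n=2^{n+1}$. In that column place the entry $2^{-n/2}n^{-2}$ at each position $(m_n-k,m_n)$ with $k\in I_n$. Distinct $n$ use disjoint columns, and each row meets each column at most once. The column $\ell(2,1)$ norm of column $m_n$ is $\esim n^{-2}$, which is summable, so $A\in\CC\subset\Bl^K$.

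If the rows of different blocks are disjoint, each row contains a single entry. Then $\|A\|_{\RR}$ equals the total $\ell_1$ mass $\sum_n 2^{n}\cdot2^{-n/2}n^{-2}=\infty$. If the rows overlap, I would shift the columns further apart, for instance $m_n=4^{n}$, to make the row sets disjoint.

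The transposed construction, using the rows $j_n$ and entries $(j_n,j_n+k)$ with $k\in I_n$, gives an element of $\RR\setminus\CC$, hence of $\Bl^K\setminus\CC$. The matrix $A^{(2)}$ of Theorem~A is an alternative witness for $\Bl^K\setminus\CC$, since its column sums are $\sum 1/k=\infty$.
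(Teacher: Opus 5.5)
Your proposal is correct. The sufficiency half, the strictness via $A^{(1)}$, and the non-necessity examples follow the paper: your uniform row/column lemma is Lemma~\ref{lem:coeff}(ii), and your single-column and single-row block matrices play the roles of $A^{(3)}$ and $A^{(2)}$.

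The genuine difference is in the necessity $\Bl^K\subset\MS$. You combine the Toeplitz pairing identity $\sum_{j,m}|a_{j,m}\widehat f(m-j)|=\sum_k\|A_k\|_{S_1}|\widehat f(k)|$ with $\|T(f)\|_{\Bl}=\|f\|_{\mathcal{B}}$. This shows that $(\|A_k\|_{S_1})_k$ is a coefficient multiplier from $\mathcal{B}$ to $\ell_1$, and you then quote the Anderson--Shields identity \eqref{eq:AS} as a black box. The paper instead re-proves the needed half of \eqref{eq:AS} inside Theorem~\ref{thm:toeplitzdual}. It tests against lacunary sums of band-limited analytic majorants obtained from the de~Leeuw--Kahane--Katznelson theorem (Lemmas~\ref{lem:blocks} and~\ref{lem:majorant}).

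Your route is much shorter. It shows directly that the necessary condition is just the scalar theorem carried over through the Toeplitz pairing, which is the identification recorded in Remark~\ref{rem:scalarcomparison}. The paper's route is self-contained and avoids the Khintchine/Abel-duality machinery of \cite{AS}. It also gives explicit absolute constants and proves the full equivalence $\sigma(A)\esim\|A\|_{\MS}$ of Theorem~C, not only the inclusion.

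In your version you should record that testing with $T(z^k)$, $k\ge0$, gives $\|A_k\|_{S_1}\le 2C<\infty$. You need this to treat $(\|A_k\|_{S_1})_k$ as a scalar multiplier at all. It also controls the coordinate $k=0$, which lies outside the blocks in the normalization of \cite{AS}.

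Three small slips:
\begin{itemize}
\item $\RR+\CC\subset\Bl^K$ holds because $\Bl^K$ is a linear space (triangle inequality), not because of solidity.
\item The entry $k=0$ of the block $I_0$ is invisible to $B'(r)$. It must be bounded by the term $\|B_0\|$ in $\|B\|_{\Bl}$.
\item Your hedge in the $\Bl^K\setminus\RR$ example is unnecessary. With $m_n=2^{n+1}$ the rows do overlap, but row $j$ receives from column $m_n$ a single entry, at diagonal index $2^{n+1}-j\in I_n$. So each row has at most one entry per dyadic block, its $\ell(2,1)$ norm equals its $\ell_1$ norm, and $\|A\|_{\RR}=\sum_{n\ge1}2^{n/2}n^{-2}=\infty$ anyway. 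The choice $m_n=4^n$ also works.
\end{itemize}
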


The proof of the right-hand inclusion yields more: Toeplitz tests
already characterize $\MS$, which is the exact matricial analogue of the
Anderson--Shields identity \eqref{eq:AS}. Here $T(f)$ denotes the
upper triangular Toeplitz matrix whose diagonals are the Taylor
coefficients of $f$, and
\[
\sigma(A):=\sup\Bigl\{\sum_{k\ge0}\|A_k\|_{S_1}\,|\widehat f(k)|:\
f\in\mathcal{B},\ \|f\|_{\mathcal{B}}\le1\Bigr\}.
\]

\begin{thmC}[Toeplitz tests; Theorem~\ref{thm:toeplitzdual}]
For an upper triangular matrix $A$ the following are equivalent:
\begin{itemize}
\item[\rm(a)] $\sum_{j,m}|a_{j,m}b_{j,m}|<\infty$ for every Toeplitz
matrix $B\in\Bl$;
\item[\rm(b)] $A\in\MS$.
\end{itemize}
Moreover $\sigma(A)\esim\|A\|_{\MS}$, with absolute implied constants.
\end{thmC}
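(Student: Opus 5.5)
Theorem~C reduces to the Anderson--Shields identity \eqref{eq:AS} once we compute the pairing of $A$ with a Toeplitz matrix. For $B=T(f)$ every entry of the $k$-th diagonal equals $\widehat f(k)$, so
\[
\sum_{j,m}|a_{j,m}b_{j,m}|=\sum_{k\ge0}|\widehat f(k)|\sum_{j}|a_{j,j+k}|=\sum_{k\ge0}\|A_k\|_{S_1}\,|\widehat f(k)|,
\]
because the trace norm of a diagonal matrix is the $\ell_1$ norm of its entries. Writing $\alpha_k:=\|A_k\|_{S_1}$, condition (a) says $\sum_k\alpha_k|\widehat f(k)|<\infty$ for every $f$ with $T(f)\in\Bl$. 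Condition (b) says $(\alpha_k)\in\ell(2,1)$, by the definition of $\MS$.

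\textbf{Step 1 (identify Toeplitz elements of $\Bl$).} I would show that $T(f)\in\Bl$ if and only if $f\in\mathcal{B}$, with $\|T(f)\|_{\Bl}\esim\|f\|_{\mathcal{B}}$. The matricial Bloch norm of Definition~\ref{def:bloch} is $\sup_{0\le r<1}(1-r^2)\|A'(r)\|$, where $A'(r)=\sum_k kA_kr^{k-1}$ and $\|\cdot\|$ is the operator norm on $\ell_2$, plus the norm of $A_0$.

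For Toeplitz $A=T(f)$, the matrix $A(r)$ is the upper triangular Toeplitz operator with symbol $f_r$. Its operator norm is $\sup_{\T}|f_r|$, the standard norm of an analytic Toeplitz operator. Hence
\[
(1-r^2)\|T(f)'(r)\|=(1-r^2)\sup_{|z|=r}|f'(z)|,
\]
and taking the supremum over $r$ gives exactly $\|f\|_{\mathcal{B}}$ up to the constant term. If the paper uses the analytic-majorant formulation instead, the same identity should come from the results recalled in Section~\ref{sec:prelim}.

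\textbf{Step 2 (equivalence).} With Step~1, (a) reads: $\sum_k\alpha_k|\widehat f(k)|<\infty$ for all $f\in\mathcal{B}$, that is, $(\alpha_k)\in\mathcal{B}^K$. By \eqref{eq:AS} this holds if and only if $(\alpha_k)\in\ell(2,1)$, which is (b).

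For the norm equivalence $\sigma(A)\esim\|A\|_{\MS}$, I would invoke the quantitative form of Anderson--Shields, $\sup_{\|f\|_{\mathcal{B}}\le1}\sum_k\alpha_k|\widehat f(k)|\esim\|(\alpha_k)\|_{\ell(2,1)}$. If the constants must be checked directly, I would prove the two bounds separately.

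For the upper bound, note that $f\in\mathcal{B}$ gives $\bigl(\sum_{k\in I_n}|\widehat f(k)|^2\bigr)^{1/2}\lsim\|f\|_{\mathcal{B}}$ uniformly in $n$. This follows from a dyadic Littlewood--Paley/Hardy estimate on $f'$ integrated over the circle of radius $r_n=1-2^{-n}$. Cauchy--Schwarz on each block then gives $\sigma(A)\lsim\|A\|_{\MS}$.

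For the lower bound, I would build an extremal Bloch function
\[
f(z)=\sum_n\sum_{k\in I_n}\frac{\alpha_k}{\bigl(\sum_{I_n}\alpha_i^2\bigr)^{1/2}}\,z^k,
\]
with normalized $\ell_2$ coefficients on each block. Such a function is not Bloch in general. So I would randomize signs, or use the analytic-majorant device (the paper's abstract mentions analytic majorants), so that block polynomials with unit $\ell_2$ coefficients have sup norm $O(1)$ on the circle. Rudin--Shapiro-type polynomials whose moduli majorize the prescribed coefficients are one concrete choice. Lacunary block sums of such polynomials are uniformly Bloch, and pairing them with $(\alpha_k)$ gives $\sum_n\bigl(\sum_{I_n}\alpha_k^2\bigr)^{1/2}$.

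\textbf{Main obstacle.} The main difficulty is this lower-bound construction: producing, for arbitrary nonnegative block weights, a Bloch function whose coefficient moduli dominate those weights with uniform constants. If the paper's earlier preliminaries already supply the scalar duality with absolute constants, this step is immediate.
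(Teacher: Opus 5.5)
Your main route is correct, but it is not the paper's. You use the pairing identity and Lemma~\ref{lem:toeplitz}(ii) to show that (a) says exactly that $\alpha=(\|A_k\|_{S_1})_{k\ge0}$ is a coefficient multiplier from $\mathcal{B}$ to $\ell_1$, and then you quote \eqref{eq:AS}. Since $\sigma(A)$ and $\|A\|_{\MS}$ depend only on $\alpha$, the constants in $\sigma(A)\esim\|A\|_{\MS}$ are those of the scalar theorem. They can also be extracted from the set identity by the open mapping theorem, once you have the easy bound. Add two small remarks. First, test (a) against $T(z^k)$, so that every $\alpha_k$ is known to be finite. Second, the block convention $I_0=\{0,1\}$ differs from that of \cite{AS} only harmlessly.

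The paper instead reproves the scalar statement in matricial form. The upper bound comes from the row estimate of Lemma~\ref{lem:coeff}(ii) applied to the first row of $T(f)$; this is the same computation as your Parseval estimate at radius $r_n=1-2^{-n}$. The lower bound comes from the closed graph theorem together with lacunary sums of band-limited de~Leeuw--Kahane--Katznelson majorants (Lemmas~\ref{lem:majorant} and~\ref{lem:blocks}). Your reduction is shorter and makes Remark~\ref{rem:scalarcomparison} the whole proof. The paper's argument is self-contained, tracks its constants, and does not depend on how \eqref{eq:AS} is proved.

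Your fallback sketch of the lower bound would not work as written, for three reasons.
\begin{itemize}
\item Random signs do not give block polynomials of bounded sup norm. For a flat profile on $I_n$, the Salem--Zygmund theorem gives sup norm of order $\sqrt n$ with high probability. In \cite{AS} randomization is used on the dual side instead, through Khintchine's inequality and Abel duality.
\item The Rudin--Shapiro construction is tied to flat profiles. It does not produce, for arbitrary $t_k$, a polynomial $g$ with $|\widehat g(k)|\ge t_k$ and $\|g\|_{H^\infty}\lsim(\sum_{k\in I_n}t_k^2)^{1/2}$; that is exactly what the theorem of \cite{dLKK} supplies.
\item Band-limiting such a majorant with a de~la~Vall\'ee~Poussin kernel spreads its spectrum over $[2^{n-2},2^{n+2})$. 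Neighbouring blocks then overlap and could cancel, so your claim that the pairing returns $\sum_n(\sum_{I_n}\alpha_k^2)^{1/2}$ needs justification. The paper restores $\widehat f(k)=\widehat{g_n}(k)$ on $I_n$ by summing only over $n$ in a fixed residue class mod~$4$.
\end{itemize}
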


Theorem~A shows that passing from Toeplitz tests to all of $\Bl$
destroys the description in Theorem~C, and that no repair within the
class of diagonal descriptions is possible. A further consequence of
Theorem~C worth recording is that $\Bl^K$ contains no nonzero Toeplitz
matrix (Remark~\ref{rem:notoeplitz}), in sharp contrast with $\Bl$
itself, which contains an isometric Toeplitz copy of the scalar Bloch
space (Lemma~\ref{lem:toeplitz}).

The necessity parts of Theorems~B and~C are proved by testing against
Toeplitz matrices whose symbols are lacunary sums of \emph{
analytic majorants}: polynomials with spectrum in a fixed dyadic band
whose coefficients dominate prescribed moduli and whose supremum norm
is controlled by their $\ell_2$ norm. These are obtained
from the theorem of de~Leeuw, Kahane and Katznelson \cite{dLKK} by a
band-limiting argument (Lemma~\ref{lem:majorant}); analytic majorants
of this type go back to Kislyakov \cite{Kis}. We note that the
original proof of \eqref{eq:AS} in \cite{AS} proceeds instead through
Khintchine's inequality and Abel duality; we have preferred a direct,
self-contained argument adapted to the matricial pairing.

Related Schur-multiplier problems for matrices with operator entries
are studied in \cite{BG}; the phenomena there are of a different
nature, but they confirm that matricial multiplier problems are
sensitive to more than the diagonal norms.

The paper is organized as follows. Section~\ref{sec:prelim} fixes
notation and contains three lemmas used throughout.
Section~\ref{sec:failure} proves Theorem~A,
Section~\ref{sec:sufficient} the left-hand half of Theorem~B, and
Section~\ref{sec:necessary} Theorem~C and the right-hand half of
Theorem~B. Section~\ref{sec:problems} collects open problems.

\section{Preliminaries}\label{sec:prelim}

\subsection*{Notation}

For nonnegative quantities $u,v$ we write $u\lsim v$ if $u\le Cv$ for
an absolute constant $C\in(0,\infty)$, independent of all parameters
involved (in particular of the matrices, indices and radii); $u\esim v$
means $u\lsim v$ and $v\lsim u$.

All matrices are infinite matrices $A=(a_{j,m})_{j,m\ge1}$ with complex
entries and are assumed \emph{upper triangular}: $a_{j,m}=0$ for
$m<j$. For $k\ge0$ the $k$-th diagonal $A_k$ is the matrix agreeing
with $A$ on the entries $(j,j+k)$, $j\ge1$, and vanishing elsewhere.
We write
\[
a_k^{\,l}:=a_{l,\,l+k}\qquad(l\ge1,\ k\ge0)
\]
for the $l$-th entry of the $k$-th diagonal, so that $l$ labels the row
and $m=l+k$ the column of the entry. The row and column sequences of
$A$ are
\[
R_j(A):=(a_k^{\,j})_{k\ge0}\quad(j\ge1),\qquad
C_m(A):=(a_k^{\,m-k})_{0\le k\le m-1}\quad(m\ge1);
\]
each column of an upper triangular matrix is finite.

A diagonal $A_k$ is identified with the diagonal-type operator it
induces on $\ell_2$; thus $\|A_k\|_{B(\ell_2)}=\sup_l|a_k^{\,l}|$ and
$\|A_k\|_{S_1}=\sum_l|a_k^{\,l}|$, where $S_1$ is the trace class.
The Schur product is $A*B=(a_{j,m}b_{j,m})_{j,m}$ and
\[
\Mone:=\Bigl\{A:\ \|A\|_{\Mone}:=\sum_{j,m}|a_{j,m}|
=\sum_{k\ge0}\|A_k\|_{S_1}<\infty\Bigr\}.
\]
For a space $X$ of upper triangular matrices the K\"othe dual $X^K$ is
defined by \eqref{eq:kothe}; equivalently, $A\in X^K$ if and only if
$A*B\in\Mone$ for every $B\in X$. The space $X^K$ is always
\emph{solid}: if $A\in X^K$ and $|c_{j,m}|\le|a_{j,m}|$ for all $j,m$,
then $C\in X^K$.

\subsection*{Dyadic blocks and mixed norms}
We fix the dyadic decomposition
\[
I_0:=\{0,1\},\qquad I_n:=\{k\in\N:\ 2^n\le k<2^{n+1}\}\quad(n\ge1),
\]
so that every integer $k\ge0$ lies in exactly one block. In
\cite{AS,MMPP} the blocks are given by the second formula for all
$n\ge0$, the sequences in \cite{AS} being indexed from $k=1$; under
that convention the index $k=0$, the main diagonal, belongs to no
block. The convention above covers every $k\ge0$ and is the one used
throughout; the counterexamples of Section~\ref{sec:failure} are valid
under either convention (see Proposition~\ref{prop:warmup}). For a
scalar sequence $x=(x_k)_{k\ge0}$ we write
\[
\|x\|_{\ell(2,1)}:=\sum_{n\ge0}\Bigl(\sum_{k\in
I_n}|x_k|^2\Bigr)^{1/2},
\]
and note that
\begin{equation}\label{eq:harmonic}
\bigl\|(1/k)_{k\ge1}\bigr\|_{\ell(2,1)}
\le1+\sum_{n\ge1}\Bigl(2^n\cdot2^{-2n}\Bigr)^{1/2}
=1+\sum_{n\ge1}2^{-n/2}<\infty ,
\end{equation}
a computation used  below.

\begin{definition}\label{def:mixed}
For an upper triangular matrix $A$ set
\[
\|A\|_{\MS}:=\bigl\|(\|A_k\|_{S_1})_{k\ge0}\bigr\|_{\ell(2,1)},\qquad
\|A\|_{\Minf}:=\bigl\|(\|A_k\|_{B(\ell_2)})_{k\ge0}\bigr\|_{\ell(2,1)},
\]
and let $\MS$ and $\Minf$ be the corresponding spaces of matrices with
finite norm. It it easy to see that $\MS\subset \Minf$.
\end{definition}

\begin{definition}\label{def:rowcol}
For an upper triangular matrix $A$ set
\[
\|A\|_{\RR}:=\sum_{j\ge1}\|R_j(A)\|_{\ell(2,1)},\qquad
\|A\|_{\CC}:=\sum_{m\ge1}\|C_m(A)\|_{\ell(2,1)},
\]
the finite sequences $C_m(A)$ being extended by zero, and let $\RR$,
$\CC$ be the corresponding spaces and
$\RR+\CC=\{A'+A'':A'\in\RR,\ A''\in\CC\}$.
\end{definition}

All four quantities are norms and the corresponding spaces are Banach
spaces; the routine verifications are omitted. Since
$\|A_k\|_{S_1}=\sum_j|a_k^{\,j}|=\sum_m|a_k^{\,m-k}|$, Minkowski's
inequality in $\ell_2(I_n)$ gives
\begin{equation}\label{eq:minkowski}
\|A\|_{\MS}\le\min\bigl(\|A\|_{\RR},\,\|A\|_{\CC}\bigr).
\end{equation}

\subsection*{The matricial Bloch space}
For $0\le r<1$ let
\[
A'(r):=\sum_{k\ge1}kA_kr^{k-1},
\]
understood entrywise and required to define a bounded operator on
$\ell_2$ for each $r$.

\begin{definition}\label{def:bloch}
The matricial Bloch space $\Bl$ consists of all upper triangular
matrices $A$ with
\[
\|A\|_{\Bl}:=\sup_{0\le r<1}(1-r^2)\,\|A'(r)\|_{B(\ell_2)}
+\|A_0\|_{B(\ell_2)}<\infty .
\]
\end{definition}

$(\Bl,\|\cdot\|_{\Bl})$ is a Banach space; see \cite{PP,MMPP}. We also
consider the little space
\[
\Blz:=\Bigl\{A\in\Bl:\ \text{each }A_k\text{ is compact and }
\lim_{r\to1^-}(1-r^2)\|A'(r)\|_{B(\ell_2)}=0\Bigr\},
\]
introduced in \cite{MPPP}; see also \cite{MMPP,PP}.
For $f(z)=\sum_{k\ge0}c_kz^k$ analytic on $D$, $T(f)$ denotes the upper
triangular Toeplitz matrix with $t_k^{\,l}=c_k$ for all $l$.

\subsection*{Three lemmas}

The material of this subsection is partly known, and we indicate the
sources. Lemma~\ref{lem:diagproj} and Lemma~\ref{lem:toeplitz} (i) are folklore in the Schur
multiplier literature (see, e.g., \cite{Ben,PP}).
Lemma~\ref{lem:coeff}(i) appears, with an unspecified constant, in the
proof of \cite[Theorem~6]{Po}, and its analogue for the little space is
used in the proof of \cite[Theorem~3.4]{MMPP}; the identification of
Lemma~\ref{lem:toeplitz} goes back to \cite{Po} (see also \cite{PP}).
Short proofs are nevertheless included, both to keep the paper
self-contained and because the explicit constants enter the arguments
of Section~\ref{sec:necessary}. By contrast, the row and column dyadic
estimates of Lemma~\ref{lem:coeff}(ii), which are the technical core of
our positive results, extend to Bloch \emph{matrices} the classical
scalar estimate of Anderson and Shields (see the proof of
\cite[Lemma~8]{AS}) and we have not found them in the literature in this form.

\begin{lemma}[diagonal projections]\label{lem:diagproj}
It yields $\|M_k\|_{B(\ell_2)}\le\|M\|_{B(\ell_2)}$ for every $M\in B(\ell_2)$
and every $k\in\Z$.
\end{lemma}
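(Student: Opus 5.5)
The idea is to write the $k$-th diagonal projection as an average of unitary conjugates of $M$. This makes the bound immediate from unitary invariance of the operator norm and the triangle inequality.

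Here $M_k$ denotes the matrix agreeing with $M$ on the entries $(j,j+k)$ and vanishing elsewhere; for $k<0$ these are the entries below the main diagonal. For $t\in\T=[0,2\pi)$, let $U_t$ be the diagonal unitary on $\ell_2$ with entries $\e^{ijt}$, $j\ge1$. Then $U_tMU_t^{*}$ has entries $\e^{i(j-m)t}m_{j,m}$, and $\|U_tMU_t^*\|_{B(\ell_2)}=\|M\|_{B(\ell_2)}$.

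First, I would establish the averaging formula
\[
M_k=\frac{1}{2\pi}\int_0^{2\pi}\e^{ikt}\,U_tMU_t^{*}\,dt .
\]
On entries this holds because $\frac1{2\pi}\int_0^{2\pi}\e^{i(k+j-m)t}\,dt$ equals $1$ if $m=j+k$ and $0$ otherwise.

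The only point needing care is the sense in which the integral is taken, since $t\mapsto U_tMU_t^*$ need not be norm-continuous. I would interpret the integral weakly. For $x,y\in\ell_2$, define the operator $M_k$ by the sesquilinear form
\[
\langle M_kx,y\rangle=\frac1{2\pi}\int_0^{2\pi}\e^{ikt}\langle MU_t^*x,U_t^*y\rangle\,dt .
\]
The integrand is a continuous bounded function of $t$, bounded by $\|M\|\,\|x\|\,\|y\|$. Testing on the basis vectors $x=e_m$, $y=e_j$ recovers the entrywise identity above.

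The estimate then follows in one line:
\[
|\langle M_kx,y\rangle|\le\frac1{2\pi}\int_0^{2\pi}\|M\|\,\|x\|\,\|y\|\,dt=\|M\|\,\|x\|\,\|y\| .
\]
Hence $M_k$ defines a bounded operator with $\|M_k\|_{B(\ell_2)}\le\|M\|_{B(\ell_2)}$.

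No real obstacle is expected. The only subtlety is justifying the weak integral, and that is handled by the continuity and boundedness of the scalar integrand.
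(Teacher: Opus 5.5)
Your proof is correct and is essentially the paper's argument: the paper also writes $M_k$ as the average $\int_0^1\e^{2\pi ikt}\,D_tMD_t^*\,dt$ of conjugates of $M$ by the diagonal unitaries $D_t=\operatorname{diag}(\e^{2\pi ilt})_{l\ge1}$. The only difference is that the paper integrates strongly, applying the integral to a fixed vector $x$ in $\ell_2$ and using the strong continuity of $t\mapsto D_tx$, whereas you integrate weakly through the sesquilinear form, which works equally well.
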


\begin{proof}
Let $D_t=\operatorname{diag}(\e^{2\pi ilt})_{l\ge1}$, so that the
$(j,m)$ entry of $D_tMD_t^*$ is $m_{j,m}\e^{-2\pi i(m-j)t}$ and hence
\[
M_kx=\int_0^1\e^{2\pi ikt}\,D_tMD_t^*x\,dt\qquad(x\in\ell_2),
\]
the integral converging in $\ell_2$ because $t\mapsto D_tx$ is
continuous for each fixed $x$, by dominated convergence. Consequently
$\|M_kx\|\le\|M\|\,\|x\|$ for every $x$, that is, $\|M_k\|\le\|M\|$.
\end{proof}

\begin{lemma}[coefficient estimates]\label{lem:coeff}
Let $B\in\Bl$.
\begin{itemize}
\item[\rm(i)] $\|B_k\|_{B(\ell_2)}\le\e\,\|B\|_{\Bl}$ for every
$k\ge0$; in particular $|b_k^{\,l}|\le\e\,\|B\|_{\Bl}$ for all $k,l$.
\item[\rm(ii)] The following estimates hold uniformly over the block index $n\ge0$, the row index $j\ge1$, and the column index $m\ge1$:
\[
\Bigl(\sum_{k\in I_n}|b_k^{\,j}|^2\Bigr)^{1/2}\lsim\|B\|_{\Bl}
\qquad\text{and}\qquad
\Bigl(\sum_{k\in I_n,\ k<m}|b_k^{\,m-k}|^2\Bigr)^{1/2}
\lsim\|B\|_{\Bl}.
\]
\end{itemize}
\end{lemma}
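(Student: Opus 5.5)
For (i) we combine Lemma~\ref{lem:diagproj} with Cauchy's formula in $r$. Since $A'(r)=\sum_{k\ge1}kB_kr^{k-1}$ is bounded and has diagonal projections $kB_kr^{k-1}$, Lemma~\ref{lem:diagproj} gives
\[
k\,r^{k-1}\|B_k\|\le\|B'(r)\|\le\frac{\|B\|_{\Bl}}{1-r^2}\qquad(k\ge1).
\]
Taking $r=1-1/k$ (so that $r^{k-1}\ge e^{-1}$ and $1-r^2\ge 1/k$) yields $\|B_k\|\le \e\,\|B\|_{\Bl}$; a direct check of the elementary inequality $\sup_r k r^{k-1}(1-r^2)\ge 1/\e$ makes the constant $\e$ exact. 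For $k=0$ we use $\|B_0\|\le\|B\|_{\Bl}$ directly. The entrywise bound then follows from $|b_k^{\,l}|\le\|B_k\|$.

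For (ii) we reduce to a scalar problem by testing $B'(r)$ against unit vectors. For the row estimate fix $j$ and consider the $\ell_2$-row $e_j^*B'(r)$, whose entries are $k b_k^{\,j}r^{k-1}$ at column $j+k$. Its norm is at most $\|B'(r)\|$, so
\[
\Bigl(\sum_{k\ge1}k^2|b_k^{\,j}|^2r^{2k-2}\Bigr)^{1/2}\le\frac{\|B\|_{\Bl}}{1-r^2}.
\]
Choose $r=1-2^{-n}$ for $n\ge1$. For $k\in I_n$ we have $k\ge2^n$ and $r^{2k-2}\ge r^{2^{n+2}}\gtrsim1$, so restricting the sum to $I_n$ gives
\[
2^n\Bigl(\sum_{k\in I_n}|b_k^{\,j}|^2\Bigr)^{1/2}\lsim 2^n\|B\|_{\Bl}.
\]
This is exactly the Anderson--Shields mechanism, with the norm of a row of an operator replacing $|f'(z)|$ after Parseval. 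The block $I_0=\{0,1\}$ is handled by (i).

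The column estimate is identical with $B'(r)e_m$ in place of $e_j^*B'(r)$: its entries are $k b_k^{\,m-k}r^{k-1}$ for $1\le k<m$, and the same choice of $r$ restricted to $k\in I_n$, $k<m$, gives the claim.

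The main point to check is that the norm of a row or column of a bounded operator is dominated by the operator norm. This holds trivially, since $\|e_j^*M\|=\|M^*e_j\|\le\|M\|$. The remaining care lies only in the uniform lower bound on $r^{2k-2}$ over $I_n$, namely $(1-2^{-n})^{2^{n+1}}\ge 1/16$ for $n\ge1$. I therefore expect no real obstacle; the only delicate bookkeeping is the constant in (i).
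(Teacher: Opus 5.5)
Your proposal is correct and follows the paper's proof essentially verbatim. For (i) you apply Lemma~\ref{lem:diagproj} to $B'(r)$ at $r=1-1/k$, and for (ii) you bound the $\ell_2$ norms of the $j$-th row and $m$-th column of $B'(r)$ at $r_n=1-2^{-n}$, using $r_n^{\,k-1}\ge 2^{-4}$ on $I_n$, with the block $I_0$ covered by (i). Keeping $1-r^2$ instead of weakening to $1-r$ is harmless, and the mention of Cauchy's formula and of making $\e$ \emph{exact} is superfluous, since nothing beyond Lemma~\ref{lem:diagproj} is used and only the bound with constant $\e$ is claimed.
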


\begin{proof}
(i) The case $k=0$ is part of the definition. For $k\ge1$,
Lemma~\ref{lem:diagproj} applied to $B'(r)$ gives
\[
k\,\|B_k\|\,r^{k-1}\le\|B'(r)\|\le\|B\|_{\Bl}/(1-r).
\]
The case $k=1$ follows by taking $r=0$; for $k\ge2$ take $r=1-1/k$ and use
$(1-1/k)^{k-1}\ge\e^{-1}$.

(ii) The function $h(x)=\ln(1-x)+(2\ln2)x$ is concave with
$h(0)=h(1/2)=0$, so
\begin{equation}\label{eq:logineq}
\ln(1-x)\ge-(2\ln2)\,x\qquad(0<x\le\tfrac12).
\end{equation}
Fix $j\ge1$ and $n\ge1$, and denote $r_n=1-2^{-n}$. 

The $\ell_2$ norm of
the $j$-th row of a bounded operator $M$ equals $\|M^*e_j\|_2\le\|M\|$;
the $j$-th row of $B'(r)$ has the entry $k\,b_k^{\,j}r^{k-1}$ in column
$j+k$, thus
\begin{equation}\label{eq:rowtest}
\Bigl(\sum_{k\ge1}k^2|b_k^{\,j}|^2r^{2(k-1)}\Bigr)^{1/2}
\le\|B'(r)\|\le\frac{1}{1-r}\|B\|_{\Bl}.
\end{equation}
For $k\in I_n$ we have $k\ge2^n$ and, by \eqref{eq:logineq} with
$x=2^{-n}$,
\[
r_n^{\,k-1}\ge r_n^{\,2^{n+1}}
=\exp\bigl(2^{n+1}\ln(1-2^{-n})\bigr)\ge2^{-4},
\]
so that $k\,r_n^{\,k-1}\gtrsim2^n$ on $I_n$.

Since $1-r_n=2^{-n}$,
evaluating \eqref{eq:rowtest} at $r=r_n$ yields
\[
2^n\bigl(\sum_{k\in I_n}|b_k^{\,j}|^2\bigr)^{1/2}
\lsim2^n\|B\|_{\Bl},
\]
which is the row estimate for $n\ge1$; the block
$I_0$ is covered by (i). 

The column estimate is identical: the
$\ell_2$ norm of the $m$-th column of $M$ is 
\[
\|Me_m\|_2\le\|M\|,
\]
and
the $m$-th column of $B'(r)$ has the entry $k\,b_k^{\,m-k}r^{k-1}$ in
row $m-k$, $1\le k\le m-1$.
\end{proof}

\begin{lemma}[Toeplitz matrices; cf.~{\cite{Po,PP}}]\label{lem:toeplitz}
Let $f(z)=\sum_{k\ge0}c_kz^k$ be analytic on $D$.
\begin{itemize}
\item[\rm(i)] If $f\in H^\infty$ then $T(f)\in B(\ell_2)$ and
$\|T(f)\|_{B(\ell_2)}=\|f\|_{H^\infty}$.
\item[\rm(ii)] $T(f)\in\Bl$ if and only if $f\in\mathcal{B}$, and
$\|T(f)\|_{\Bl}=\|f\|_{\mathcal{B}}$.
\end{itemize}
\end{lemma}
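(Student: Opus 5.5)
For (i), I will identify $T(f)$ with the analytic Toeplitz operator, that is, multiplication by $f$ on the Hardy space $H^2$. In the standard basis $(e_l)$ of $\ell_2\cong H^2$ (with $e_l\leftrightarrow z^{l-1}$), the $(j,m)$ entry of multiplication by $f$ is $\langle fz^{m-1},z^{j-1}\rangle$. This vanishes unless $m\ge j$, in which case it equals $\widehat f(m-j)$.

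That is exactly the upper triangular Toeplitz matrix with diagonals $c_k$, once we note that the paper's convention $t_k^{\,l}=c_k$ sits at position $(l,l+k)$. Since the convention might instead produce the transpose, that is, the adjoint of the multiplication operator, I will remark that adjoints have the same norm, so either orientation is fine.

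For $f\in H^\infty$ the bound $\|fg\|_{H^2}\le\|f\|_\infty\|g\|_{H^2}$ gives $\|T(f)\|\le\|f\|_{H^\infty}$. For the reverse inequality I will test on the normalized reproducing kernels $k_w=(1-|w|^2)^{1/2}(1-\bar w z)^{-1}$. The adjoint satisfies $M_f^*k_w=\overline{f(w)}k_w$, so $\|M_f\|\ge|f(w)|$ for every $w\in D$. Taking the supremum over $w$ yields $\|T(f)\|\ge\|f\|_{H^\infty}$. The matrix then defines a bounded operator on $\ell_2$ whose entries are those of $T(f)$.

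For (ii) I will compute $T(f)'(r)=\sum_{k\ge1}kc_kr^{k-1}T(z^k)=T(f'_r)$, where $f'_r(z)=f'(rz)$. This symbol is analytic on a neighbourhood of $\overline D$, hence lies in $H^\infty$. By (i), $T(f)'(r)$ is bounded and
\[
\|T(f)'(r)\|_{B(\ell_2)}=\sup_{|z|<1}|f'(rz)|=\max_{|z|=r}|f'(z)|,
\]
the last equality by the maximum principle. Likewise $\|T(f)_0\|=|c_0|=|f(0)|$.

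Therefore
\[
\|T(f)\|_{\Bl}=\sup_{0\le r<1}(1-r^2)\max_{|z|=r}|f'(z)|+|f(0)|=\sup_{|z|<1}(1-|z|^2)|f'(z)|+|f(0)|=\|f\|_{\mathcal{B}}.
\]
Both sides are finite or infinite together, which gives the equivalence and the isometry at once. The only point needing care is that the definition of $\Bl$ requires $A'(r)$ to be bounded for each $r$, and this is automatic here.

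I expect the main, if mild, obstacle to be the lower bound in (i), including keeping track of the orientation convention (upper versus lower triangular) relative to the identification with $M_f$. The reproducing kernel argument settles this cleanly in either orientation. As an alternative I could use the Fej\'er means and Lemma~\ref{lem:diagproj}-type averaging, but the kernel route is shorter.
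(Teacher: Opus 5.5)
Your strategy matches the paper's. You identify $T(f)$ with the matrix of the analytic multiplication operator $M_f$ on $H^2$, use $\|M_f\|=\|f\|_{H^\infty}$, and in (ii) recognise $T(f)'(r)$ as a Toeplitz matrix whose symbol is analytic across $\overline D$. Your reproducing-kernel proof of $\|M_f\|=\|f\|_{H^\infty}$ is a nice self-contained addition; the paper simply quotes this fact. However, two steps are wrong as written.

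\textbf{Orientation in (i).} Note that $\langle fz^{m-1},z^{j-1}\rangle=\widehat f(j-m)$, which vanishes unless $j\ge m$. So the matrix of $M_f$ is \emph{lower} triangular, and $T(f)$ is its \emph{transpose}. Your fallback, ``the transpose, that is, the adjoint,'' does not repair this. The adjoint $M_f^*$ has entries $\overline{\widehat f(m-j)}$, so $M_f^*=T(\tilde f)$ with $\tilde f(z)=\sum_k\overline{c_k}z^k$. Hence $T(f)$ is neither $M_f$ nor $M_f^*$ unless all $c_k$ are real, and your kernel argument, which treats only those two operators, does not reach $T(f)$. The fix is one line, in either of two ways:
\begin{itemize}
\item[(1)] As in the paper, write $M^{\mathsf T}=JM^*J$ with $J$ the coordinatewise conjugation. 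Then transposition preserves the operator norm.
\item[(2)] Observe that $T(f)=M_{\tilde f}^{\,*}$. Since $\tilde f(z)=\overline{f(\bar z)}$, you have $\|\tilde f\|_{H^\infty}=\|f\|_{H^\infty}$.
\end{itemize}

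\textbf{The symbol in (ii).} The identity $\sum_{k\ge1}kc_kr^{k-1}T(z^k)=T(f'_r)$, with $f'_r(z)=f'(rz)$, is false. The number $kc_kr^{k-1}$ is the $k$-th Taylor coefficient of $zf'(rz)$ but the $(k-1)$-th coefficient of $f'(rz)$. In particular $T(f'_r)$ has main diagonal $c_1I$, whereas $T(f)'(r)$ has zero main diagonal. The correct statement is $T(f)'(r)=T(g_r)$ with $g_r(z)=zf'(rz)$, as in the paper.

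This slip does not affect your conclusion. By (i) and the maximum principle, $\|T(g_r)\|=\sup_{|z|<1}|zf'(rz)|=\max_{|w|=r}|f'(w)|$, which is the value you obtained. The rest of (ii) is fine, including your remark that boundedness of $A'(r)$ is automatic.
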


\begin{proof}
(i) In the monomial basis of $H^2\cong\ell_2$ the multiplication
operator $M_f$ has the lower triangular matrix
$(\widehat f(j-m))_{j,m}$ and $\|M_f\|=\|f\|_{H^\infty}$. The matrix
$T(f)$ is its transpose, and transposition preserves the operator
norm, since $M^{\mathsf T}=JM^*J$ for the coordinatewise conjugation
$J$, an isometric conjugate-linear involution.

(ii) The diagonals of $T(f)$ are $c_kI$; since
$zf'(rz)=\sum_{k\ge1}kc_kr^{k-1}z^k$, it follows that
$T(f)'(r)=T(g_r)$ with $g_r(z):=zf'(rz)$.
By (i),
\[
\|T(f)'(r)\|_{B(\ell_2)}=\sup_{|z|\le1}|zf'(rz)|
=\max_{|w|=r}|f'(w)|,
\]
and $\|T(f)_0\|=|f(0)|$; taking the supremum gives the claim.
\end{proof}

\section{Failure of diagonal descriptions}\label{sec:failure}

We begin with an elementary observation that already disproves the
conjecture \eqref{eq:conj}; it also motivates the trace-norm candidate,
which it does not exclude.

\begin{proposition}\label{prop:warmup}
We have that $\Minf\not\subset\Bl^K$; in particular the conjecture \eqref{eq:conj}
of \cite{MMPP} is false.
\end{proposition}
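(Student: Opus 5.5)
We need a matrix $A\in\Minf$ with $A\notin\Bl^K$. Because $\Minf$ only sees the sup of each diagonal, the natural choice is a matrix whose diagonals are constant multiples of the identity on a long stretch. Then $\|A_k\|_{B(\ell_2)}$ is small, but the trace norm is infinite.

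The simplest candidate is the identity matrix, $A=I$, i.e.\ $A_0=I$ and $A_k=0$ for $k\ge1$. Its diagonal operator norms are $(1,0,0,\dots)$, so $\|A\|_{\Minf}=1$ under the convention $I_0=\{0,1\}$. To show $I\notin\Bl^K$, test against $B=I$. Its diagonal norm is $\|B_0\|_{B(\ell_2)}=1$ and $B'(r)=0$, so $B\in\Bl$. But $\sum_{j,m}|a_{j,m}b_{j,m}|=\sum_j1=\infty$. Hence $I\in\Minf\setminus\Bl^K$.

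The paper insists the counterexample work under either dyadic convention, and under the \cite{AS,MMPP} convention the main diagonal lies in no block. So I will instead put the mass on the first superdiagonal: $A=B=A_1$ with all entries $a_1^{\,l}=1$. Then
\[
\|A\|_{\Minf}=\|A_1\|_{B(\ell_2)}=1,
\]
and $1\in I_0$ under either convention. Also $B$ is the Toeplitz matrix $T(z)$, and by Lemma~\ref{lem:toeplitz}(ii), $\|B\|_{\Bl}=\|z\|_{\mathcal{B}}=\sup_{|z|<1}(1-|z|^2)=1$. Finally,
\[
\sum_{j,m}|a_{j,m}b_{j,m}|=\sum_{l\ge1}1=\infty,
\]
so $A\notin\Bl^K$, and the conjecture \eqref{eq:conj} fails. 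The same example also lies outside $\MS$, consistent with the remark that this example does not exclude the trace-norm candidate.

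There is no real obstacle here. The only points to check are that the chosen index lies in a dyadic block under both conventions, and the Bloch norm computation, which Lemma~\ref{lem:toeplitz} provides directly.
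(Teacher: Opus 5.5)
Your proof is correct and is essentially the paper's argument: the paper takes $A$ with main diagonal $(1/l)_{l\ge1}$ and tests it against $B=I$, while you put bounded but non-summable mass on a single diagonal and pair it with the Toeplitz matrix supported on that same diagonal, which is the same mechanism. Your reason for leaving the main diagonal is slightly off. Under the convention of \cite{AS,MMPP} the main diagonal contributes nothing to the norm, so a main-diagonal example lies in $\mathcal{M}(2,1)$ trivially, and that is how the paper handles that convention. Still, your switch to the first superdiagonal is harmless, since it lies in $I_0$ under both conventions, and it makes the counterexample independent of how $A_0$ is treated.
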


\begin{proof}
Let $A$ having the main diagonal 
\[ 
a_0^{\,l}=1/l, \quad l\ge1
\]and let $B$
be the identity. Then 
\[
\|A\|_{\Minf}=\|A_0\|_{B(\ell_2)}=1
\] 
and
$\|B\|_{\Bl}=1$ (as $B'(r)=0$), while
\[
\sum_{j,m}|a_{j,m}b_{j,m}|=\sum_l1/l=\infty.
\]
The same matrices work
under the dyadic convention of \cite{MMPP}: there the index $k=0$
belongs to no block, so all block sums of $A$ vanish and $A$ lies in
the space $\mathcal{M}(2,1)$ of \cite{MMPP} trivially.
\end{proof}

Note that $\|A_0\|_{S_1}=\infty$ for the matrix above, so
Proposition~\ref{prop:warmup} does not exclude the
trace-norm candidate $\MS$. The main result of this section shows that
the trace-norm candidate fails as well, and that the failure is
structural.

\begin{theorem}[placement dichotomy]\label{thm:dichotomy}
Define upper triangular matrices $A^{(1)}$ and $A^{(2)}$ by
\[
(A^{(1)})_k^{\,l}=\frac{\delta_{l,k}}{k},\qquad
(A^{(2)})_k^{\,l}=\frac{\delta_{l,1}}{k}\qquad k\ge1,\ l\ge1,
\]
so that $A^{(1)}$ carries the entry $1/k$ at position $(k,2k)$ and
$A^{(2)}$ carries it at position $(1,1+k)$. Then:
\begin{itemize}
\item[\rm(i)] for every $k\ge0$ the diagonals $A^{(1)}_k$ and
$A^{(2)}_k$ coincide up to a permutation of their entries, and
$A^{(1)},A^{(2)}\in\MS$;
\item[\rm(ii)] $A^{(1)}\notin\Bl^K$: the matrix
$B:=\sum_{k\ge1}E_{k,2k}$ satisfies $\|B\|_{\Bl}\le2$, while the
pairing of $A^{(1)}$ with $B$ is the harmonic series;
\item[\rm(iii)] $A^{(2)}\in\Bl^K$, with
$\sum_{j,m}|(A^{(2)})_{j,m}\,b_{j,m}|\lsim\|B\|_{\Bl}$ for every
$B\in\Bl$.
\end{itemize}
\end{theorem}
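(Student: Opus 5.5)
The proof treats the three parts separately; each reduces to a short computation or to an earlier lemma.

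For (i), observe first that for every $k\ge1$ both $A^{(1)}_k$ and $A^{(2)}_k$ have exactly one nonzero entry, equal to $1/k$. In $A^{(1)}_k$ it sits at $l=k$, and in $A^{(2)}_k$ at $l=1$. The main diagonals ($k=0$) vanish in both matrices. Hence the diagonals agree up to a permutation of their entries, and
\[
\|A^{(1)}_k\|_{S_1}=\|A^{(2)}_k\|_{S_1}=1/k .
\]
The bound \eqref{eq:harmonic} then gives $\|A^{(i)}\|_{\MS}<\infty$.

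For (ii), the pairing of $A^{(1)}$ with $B=\sum_{k\ge1}E_{k,2k}$ is $\sum_k 1/k=\infty$. The substance is the bound $\|B\|_{\Bl}\le2$. We have $B_0=0$, and $B_k=E_{k,2k}$, so
\[
B'(r)=\sum_{k\ge1}k r^{k-1}E_{k,2k}.
\]
The entries of this matrix lie in distinct rows and distinct columns, so it is a partial permutation matrix weighted by the entries. Its operator norm is therefore $\sup_k k r^{k-1}$. The elementary bound $k r^{k-1}(1-r)\le 1$ follows from
\[
k r^{k-1}(1-r)\le\sum_{i<k} r^{i}(1-r)\le 1 ,
\]
using $r^{k-1}\le r^i$ for $i\le k-1$. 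Consequently $(1-r^2)\|B'(r)\|\le 1+r\le 2$.

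For (iii), the pairing is $\sum_{k\ge1}|b_{1,1+k}|/k=\sum_{k\ge1}|b_k^{\,1}|/k$. The plan is to split this sum into the dyadic blocks $I_n$ and apply Cauchy--Schwarz in each block, which gives
\[
\sum_{k\ge1}\frac{|b_k^{\,1}|}{k}\le\sum_{n\ge0}\Bigl(\sum_{k\in I_n,\,k\ge1}k^{-2}\Bigr)^{1/2}\Bigl(\sum_{k\in I_n}|b_k^{\,1}|^2\Bigr)^{1/2}.
\]
The row estimate of Lemma~\ref{lem:coeff}(ii) with $j=1$ bounds the second factor by a constant times $\|B\|_{\Bl}$, uniformly in $n$. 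The first factor is at most $\lsim 2^{-n/2}$, exactly as in \eqref{eq:harmonic}. Summing the geometric series gives $\lsim\|B\|_{\Bl}$.

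The only step needing care is the dyadic Cauchy--Schwarz in (iii). Lemma~\ref{lem:coeff}(i) alone gives only $|b_k^{\,1}|\le \e\|B\|_{\Bl}$, and with that bound the pairing would again be a harmonic series. The row estimate in $\ell_2$ over each dyadic block is therefore essential. This is precisely where the first row differs from the placements in $A^{(1)}$, where each entry lies in a different row and column and no such square-summability is available.
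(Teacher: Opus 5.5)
Your proof is correct and follows the paper's argument in all three parts: rank-one diagonals combined with \eqref{eq:harmonic} for (i), the weighted partial permutation $B'(r)$ for (ii), and blockwise Cauchy--Schwarz with the row estimate of Lemma~\ref{lem:coeff}(ii) at $j=1$ for (iii). The only deviation is in (ii), where you bound $kr^{k-1}\le\sum_{i<k}r^i=(1-r^k)/(1-r)$ to get $(1-r^2)\|B'(r)\|\le 1+r$ directly; the paper instead splits into the cases $r\le\frac12$ and $r>\frac12$ and uses $\max_{x>0}xr^x=1/(\e\ln(1/r))$, so your route is a slightly cleaner way to reach the same bound.
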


\begin{proof}
(i) For each $k\ge1$ both diagonals consist of the single entry $1/k$,
placed at different positions; both $0$-th diagonals vanish. Since all
diagonals are rank one, $\|A^{(i)}_k\|_{S_1}=1/k$, and
$A^{(i)}\in\MS$ by \eqref{eq:harmonic}.

(ii) Here $E_{j,m}$ denotes the matrix unit. The matrix
\[
B'(r)=\sum_{k\ge1}kr^{k-1}E_{k,2k}
\]
has at most one nonzero entry in
every row and in every column, so
\[
\|B'(r)\|_{B(\ell_2)}=\sup_{k\ge1}kr^{k-1}.
\]
We claim that
\begin{equation}\label{eq:supbound}
(1-r^2)\,\sup_{k\ge1}kr^{k-1}\le2\qquad(0\le r<1).
\end{equation}
If $r\le\frac12$ then $kr^{k-1}\le k2^{1-k}\le1$ for all $k\ge1$. If
$r>\frac12$, then $\max_{x>0}xr^x=1/(\e\ln(1/r))$ gives
\[
kr^{k-1}=\frac1r\,kr^k\le\frac{1}{\e\,r\ln(1/r)}
\le\frac{2}{\e(1-r)},
\]
using $\ln(1/r)\ge1-r$, and consequently
$(1-r^2)kr^{k-1}\le2(1+r)/\e<2$. Since $B_0=0$, \eqref{eq:supbound}
gives $\|B\|_{\Bl}\le2$; and the pairing equals
$\sum_{k\ge1}1/k=\infty$.

(iii) Only the first row of $B$ enters the pairing. By the
Cauchy--Schwarz inequality on each dyadic block, by
Lemma~\ref{lem:coeff}(ii) applied to the row $j=1$, and by
\eqref{eq:harmonic},
\begin{align*}
\sum_{j,m}|(A^{(2)})_{j,m}\,b_{j,m}|
&=\sum_{k\ge1}\frac{|b_k^{\,1}|}{k}\\
&\le\sum_{n\ge0}\Bigl(\sum_{k\in I_n,\,k\ge1}k^{-2}\Bigr)^{1/2}
\Bigl(\sum_{k\in I_n}|b_k^{\,1}|^2\Bigr)^{1/2}\\
&\lsim\|B\|_{\Bl}. 
\end{align*}
The proof is complete. \qedhere
\end{proof}

\begin{corollary}[no diagonal description]\label{cor:impossible}
Let $\Phi$ be any permutation-invariant functional on diagonal
matrices, with values in $[0,\infty]$, and let $S$ be any set of
sequences. Then
\[
\bigl\{A\ \text{upper triangular}:\ (\Phi(A_k))_{k\ge0}\in S\bigr\}
\ \neq\ \Bl^K .
\]
In particular $\Bl^K$ is not a mixed-norm space of diagonal norms for
any norm on the diagonals and any sequence-space condition, and
$\MS\not\subset\Bl^K$, $\Minf\not\subset\Bl^K$.
\end{corollary}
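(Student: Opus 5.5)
The plan is to argue by contradiction, using the two matrices $A^{(1)},A^{(2)}$ of Theorem~\ref{thm:dichotomy}. Suppose that for some permutation-invariant $\Phi$ and some set $S$ of sequences
\[
X_{\Phi,S}:=\bigl\{A\ \text{upper triangular}:\ (\Phi(A_k))_{k\ge0}\in S\bigr\}=\Bl^K .
\]
By Theorem~\ref{thm:dichotomy}(i), for each $k\ge0$ the diagonals $A^{(1)}_k$ and $A^{(2)}_k$ carry the same multiset of moduli of entries. For $k\ge1$ this multiset is one entry $1/k$ together with zeros. For $k=0$ both diagonals are identically zero. Permutation invariance then gives $\Phi(A^{(1)}_k)=\Phi(A^{(2)}_k)$ for every $k$, so the two sequences $(\Phi(A^{(i)}_k))_{k\ge0}$ coincide. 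Hence $A^{(1)}\in X_{\Phi,S}$ if and only if $A^{(2)}\in X_{\Phi,S}$.

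On the other hand, Theorem~\ref{thm:dichotomy}(iii) gives $A^{(2)}\in\Bl^K$, and Theorem~\ref{thm:dichotomy}(ii) gives $A^{(1)}\notin\Bl^K$. Under the assumed equality $X_{\Phi,S}=\Bl^K$ we would get $A^{(2)}\in X_{\Phi,S}$, hence $A^{(1)}\in X_{\Phi,S}=\Bl^K$, which is a contradiction. The only point needing care is the interpretation of "permutation-invariant" for infinite diagonals: the multiset of moduli must include the zero entries with their multiplicities. Here both diagonals have exactly one nonzero entry and countably many zeros, so they agree as multisets in any reasonable sense. I expect this to be the only mild obstacle; it is a matter of wording rather than substance.

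For the "in particular" statements, there are two steps.
\begin{itemize}
\item Any mixed-norm space of diagonal norms has the form $X_{\Phi,S}$, where $\Phi$ is the chosen norm on the diagonals and $S$ is the sequence-space condition. The norms in question are permutation-invariant, so such a space cannot equal $\Bl^K$.
\item For the inclusions, Theorem~\ref{thm:dichotomy}(i) gives $A^{(1)}\in\MS$, while (ii) gives $A^{(1)}\notin\Bl^K$. Therefore $\MS\not\subset\Bl^K$. Since $\MS\subset\Minf$ (Definition~\ref{def:mixed}), we also have $A^{(1)}\in\Minf\setminus\Bl^K$, so $\Minf\not\subset\Bl^K$. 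The latter also follows from Proposition~\ref{prop:warmup}.
\end{itemize}
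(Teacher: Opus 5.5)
Your proof is correct and follows the paper's argument: permutation invariance gives $\Phi(A^{(1)}_k)=\Phi(A^{(2)}_k)$ for every $k$, so any diagonal-defined set contains both or neither of $A^{(1)},A^{(2)}$, whereas $\Bl^K$ contains exactly one of them; the inclusions then follow from $A^{(1)}\in\MS\subset\Minf$ together with Theorem~\ref{thm:dichotomy}(ii). Your remark that zero entries must be counted in the multiset is a harmless clarification that the paper leaves implicit.
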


\begin{proof}
By Theorem~\ref{thm:dichotomy}(i), $\Phi(A^{(1)}_k)=\Phi(A^{(2)}_k)$
for every $k$, so the displayed set contains both of
$A^{(1)},A^{(2)}$ or neither, while $\Bl^K$ contains exactly one of
them. The last assertions follow from
$A^{(1)}\in\MS\subset\Minf$ and Theorem~\ref{thm:dichotomy}(ii).
\end{proof}

\begin{remark}\label{rem:schatten}
Since the diagonals of $A^{(1)}$ are rank one, all Schatten norms
coincide on them; thus no mixed-norm space built from \emph{any}
Schatten norm $S_p$, $0<p\le\infty$, of the diagonals is contained in
$\Bl^K$.
\end{remark}

\begin{remark}[the little space]\label{rem:littlebloch}
The failure is not caused by the ``large'' matrices in $\Bl$. Let
$\widetilde B$ having the entry $1/\ln(k+2)$ and $\widetilde A$ the
entry $\ln(k+2)/k$ at position $(k,2k)$, $k\ge1$. Each diagonal of
$\widetilde B$ has finite rank, and by \eqref{eq:supbound}, for every
$K\ge1$,
\[
\limsup_{r\to1^-}\,(1-r^2)\,\|\widetilde B'(r)\|
\le\limsup_{r\to1^-}\Bigl[(1-r^2)\max_{k\le K}\frac{k}{\ln(k+2)}
+\frac{2}{\ln(K+2)}\Bigr]=\frac{2}{\ln(K+2)},
\]
so $\widetilde B\in\Blz$. On the other hand
$\|\widetilde A_k\|_{S_1}=\ln(k+2)/k$ gives dyadic block sums
$O\bigl((n+1)2^{-n/2}\bigr)$, so $\widetilde A\in\MS$, while the
pairing is $\sum_k1/k=\infty$. Hence $\MS\not\subset\Blz^K$, and the
placement dichotomy transfers verbatim to the little space. Since
$\Blz^K$ coincides with the largest solid subspace
$s(\mathcal{I}(\ell_2))$ by \cite[Theorem~4.4]{MMPP}, the example also
shows that $\MS\not\subset s(\mathcal{I}(\ell_2))$.
\end{remark}

\section{Sufficient conditions}\label{sec:sufficient}

The proof of Theorem~\ref{thm:dichotomy}(iii) suggests that row-wise
 and, by symmetry, column-wise conditions are the correct
source of membership in $\Bl^K$.

\begin{theorem}\label{thm:sufficient}
For every upper triangular matrix $A$ and every $B\in\Bl$,
\[
\sum_{j,m}|a_{j,m}b_{j,m}|
\ \lsim\ \min\bigl(\|A\|_{\RR},\,\|A\|_{\CC}\bigr)\,\|B\|_{\Bl}.
\]
In particular $\RR+\CC\subset\Bl^K$.
\end{theorem}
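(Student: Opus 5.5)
The plan is to reduce the pairing to the row and column estimates of Lemma~\ref{lem:coeff}(ii), exactly as in the proof of Theorem~\ref{thm:dichotomy}(iii), and then sum over rows or over columns. For the row bound, regroup the pairing by rows:
\[
\sum_{j,m}|a_{j,m}b_{j,m}|=\sum_{j\ge1}\sum_{k\ge0}|a_k^{\,j}|\,|b_k^{\,j}|.
\]
For fixed $j$, apply Cauchy--Schwarz on each dyadic block $I_n$:
\[
\sum_{k\ge0}|a_k^{\,j}b_k^{\,j}|\le\sum_{n\ge0}\Bigl(\sum_{k\in I_n}|a_k^{\,j}|^2\Bigr)^{1/2}\Bigl(\sum_{k\in I_n}|b_k^{\,j}|^2\Bigr)^{1/2}.
\]
The row estimate of Lemma~\ref{lem:coeff}(ii) bounds the second factor by $C\|B\|_{\Bl}$, with $C$ uniform in $n$ and $j$. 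Hence the inner sum is at most $C\|R_j(A)\|_{\ell(2,1)}\|B\|_{\Bl}$, and summing over $j$ gives $\lsim\|A\|_{\RR}\|B\|_{\Bl}$.

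The column bound is symmetric. Regroup by columns $m$: the entries are $a_k^{\,m-k}b_k^{\,m-k}$ for $0\le k\le m-1$, and I apply Cauchy--Schwarz on $I_n\cap[0,m-1]$. The second factor is then controlled by the column estimate of Lemma~\ref{lem:coeff}(ii), which is stated precisely with the restriction $k<m$. This gives $\lsim\|C_m(A)\|_{\ell(2,1)}\|B\|_{\Bl}$, and summing over $m$ gives $\lsim\|A\|_{\CC}\|B\|_{\Bl}$. Taking the smaller of the two bounds yields the displayed inequality. All interchanges of summation are justified because the terms are nonnegative.

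For the inclusion $\RR+\CC\subset\Bl^K$, let $A=A'+A''$ with $A'\in\RR$ and $A''\in\CC$. Then $|a_{j,m}b_{j,m}|\le|a'_{j,m}b_{j,m}|+|a''_{j,m}b_{j,m}|$, and the two resulting sums are finite by the row and column bounds respectively. So $\Bl^K$, being closed under the triangle inequality in this pointwise sense, contains the sum.

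There is no real obstacle, since the technical content lives in Lemma~\ref{lem:coeff}(ii). The only care needed is the block $I_0$: for $k\in\{0,1\}$ the row and column bounds follow from Lemma~\ref{lem:coeff}(i), since $(|b_0|^2+|b_1|^2)^{1/2}\le\sqrt2\,\e\|B\|_{\Bl}$. This is already absorbed into the uniform constant of Lemma~\ref{lem:coeff}(ii), as that lemma states its estimate for all $n\ge0$.
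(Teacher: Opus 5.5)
Your proposal is correct and follows the paper's proof essentially verbatim. You group the pairing by rows (respectively columns), apply Cauchy--Schwarz on each dyadic block, and use the uniform row and column estimates of Lemma~\ref{lem:coeff}(ii). The inclusion $\RR+\CC\subset\Bl^K$ then follows, as in the paper, from the entrywise triangle inequality.
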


\begin{proof}
 Grouping
by rows and applying the Cauchy--Schwarz inequality on each dyadic
block we get that
\begin{align*}
\sum_{j,m}|a_{j,m}b_{j,m}|
&=\sum_{j\ge1}\sum_{n\ge0}\sum_{k\in I_n}|a_k^{\,j}||b_k^{\,j}|\\
&\le\sum_{j\ge1}\sum_{n\ge0}
\Bigl(\sum_{k\in I_n}|a_k^{\,j}|^2\Bigr)^{1/2}
\Bigl(\sum_{k\in I_n}|b_k^{\,j}|^2\Bigr)^{1/2},
\end{align*}
The row half of Lemma~\ref{lem:coeff}(ii) bounds the last factor
by an absolute multiple of $\|B\|_{\Bl}$, uniformly in $j$ and $n$;
this gives the bound with $\|A\|_{\RR}$.

Grouping by columns
($m=l+k$) and using the column half of Lemma~\ref{lem:coeff}(ii) gives
the bound with $\|A\|_{\CC}$. The final assertion follows by
linearity.
\end{proof}

\begin{corollary}\label{cor:l1}
$\Mone\subset\RR\cap\CC\subset\Bl^K$; in particular
$\sum_k\|A_k\|_{S_1}<\infty$ implies $A\in\Bl^K$.
\end{corollary}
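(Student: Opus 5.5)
The two inclusions are proved separately. For $\RR\cap\CC\subset\Bl^K$ there is nothing new to do: by Theorem~\ref{thm:sufficient}, any $A$ with $\|A\|_{\RR}<\infty$ (or $\|A\|_{\CC}<\infty$) pairs summably with every $B\in\Bl$. So the work lies in $\Mone\subset\RR\cap\CC$, and the plan is to show
\[
\|A\|_{\RR}\le\|A\|_{\Mone}\qquad\text{and}\qquad\|A\|_{\CC}\le\|A\|_{\Mone}.
\]

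Fix a row $j$. The sequence $R_j(A)=(a_k^{\,j})_{k\ge0}$ satisfies, block by block,
\[
\Bigl(\sum_{k\in I_n}|a_k^{\,j}|^2\Bigr)^{1/2}\le\sum_{k\in I_n}|a_k^{\,j}|,
\]
since the $\ell_2$ norm is dominated by the $\ell_1$ norm. Summing over $n$ gives $\|R_j(A)\|_{\ell(2,1)}\le\sum_{k\ge0}|a_k^{\,j}|$, which is the $\ell_1$ norm of the $j$-th row. Summing over $j$ then yields
\[
\|A\|_{\RR}\le\sum_{j,m}|a_{j,m}|=\|A\|_{\Mone}.
\]
The column case is the same computation applied to the finite sequences $C_m(A)$, extended by zero. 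Since the dyadic blocks partition $\{0,1,2,\dots\}$, no entry is counted twice, and this gives $\|A\|_{\CC}\le\|A\|_{\Mone}$.

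Finally, the "in particular" clause is a restatement: $\sum_k\|A_k\|_{S_1}=\|A\|_{\Mone}$ by the identity recorded in the Notation subsection. Combining the two inclusions, Theorem~\ref{thm:sufficient} gives the quantitative bound
\[
\sum_{j,m}|a_{j,m}b_{j,m}|\lsim\|A\|_{\Mone}\,\|B\|_{\Bl}.
\]

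There is no real obstacle. The only point to check is that the blocks $I_n$ cover every index $k\ge0$ exactly once, which holds under the paper's convention. Alternatively, one can bypass $\RR\cap\CC$ entirely: the bound $|b_{j,m}|\le\e\|B\|_{\Bl}$ from Lemma~\ref{lem:coeff}(i) gives the pairing estimate with constant $\e$ directly.
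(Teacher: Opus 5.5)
Your proof is correct and follows the paper's own argument: the blockwise bound $\|\cdot\|_{\ell(2,1)}\le\|\cdot\|_{\ell_1}$, applied to each row and each column, gives $\|A\|_{\RR},\|A\|_{\CC}\le\|A\|_{\Mone}$, and Theorem~\ref{thm:sufficient} then supplies $\RR\cap\CC\subset\Bl^K$. Your alternative via Lemma~\ref{lem:coeff}(i) is also valid, but it yields only $\Mone\subset\Bl^K$ (with constant $\e$) and not the intermediate inclusion $\Mone\subset\RR\cap\CC$.
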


\begin{proof}
$\|R_j(A)\|_{\ell(2,1)}\le\|R_j(A)\|_{\ell_1}$ for every $j$, whence
$\|A\|_{\RR}\le\sum_{j,k}|a_k^{\,j}|=\|A\|_{\Mone}$; similarly for
columns.
\end{proof}

The row and column conditions are different, and neither is
necessary.

\begin{proposition}\label{prop:incomparable}
\begin{itemize}
\item[\rm(i)] The matrix $A^{(2)}$ of Theorem~\ref{thm:dichotomy}
satisfies $A^{(2)}\in\RR\setminus\CC$; in particular
$\CC\subsetneq\Bl^K$.
\item[\rm(ii)] There exists $A^{(3)}\in\CC\setminus\RR$; in
particular $\RR\subsetneq\Bl^K$.
\end{itemize}
Consequently neither the row condition nor the column condition is
necessary for membership in $\Bl^K$.
\end{proposition}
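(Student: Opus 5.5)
For (i) I would compute the row and column norms of $A^{(2)}$ directly. All nonzero entries of $A^{(2)}$ lie in the first row, with $(A^{(2)})_k^{\,1}=1/k$ for $k\ge1$. Hence $R_1(A^{(2)})=(0,1,1/2,1/3,\dots)$ and every other row vanishes. By \eqref{eq:harmonic}, $\|A^{(2)}\|_{\RR}=\|(1/k)_{k\ge1}\|_{\ell(2,1)}<\infty$. For the columns, column $m=1+k$ contains the single entry $1/k$, sitting at diagonal index $k=m-1$. Its $\ell(2,1)$ norm is therefore $1/k$, and
\[
\|A^{(2)}\|_{\CC}=\sum_{k\ge1}1/k=\infty .
\]
So $A^{(2)}\in\RR\setminus\CC$. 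Combined with Theorem~\ref{thm:dichotomy}(iii), which gives $A^{(2)}\in\Bl^K$, this yields $\Bl^K\not\subset\CC$. Together with Theorem~\ref{thm:sufficient}, which gives $\CC\subset\Bl^K$, the inclusion $\CC\subsetneq\Bl^K$ is strict.

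For (ii) I would transpose the construction so that the entries lie in a single column. The obstacle is that an upper triangular column is finite, so one column can only hold a finite segment. I would therefore spread the mass over infinitely many columns and make each column's $\ell(2,1)$ norm summable, while the rows still fail to be summable. Concretely, for each $N\ge1$ take the column $m_N=2^{N}+1$, which has room for diagonal indices $1\le k\le 2^N$. In that column put entries of size $c_N/k$ at diagonal index $k$, that is, at row $m_N-k$. Then
\[
\|C_{m_N}\|_{\ell(2,1)}\le c_N\,\|(1/k)\|_{\ell(2,1)}\lsim c_N .
\]
For the rows, row $j=m_N-k$ receives the entry $c_N/k$ at diagonal index $k$, so the row norms add up to at least $\sum_N c_N\sum_{k\le 2^N}1/k\approx\sum_N c_N N$, up to overlap issues. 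The choice $c_N=N^{-2}$ makes $\sum_N c_N<\infty$ but $\sum_N c_N N=\infty$. This would give $A^{(3)}\in\CC\setminus\RR$, and Theorem~\ref{thm:sufficient} then gives $A^{(3)}\in\Bl^K$, hence $\RR\subsetneq\Bl^K$.

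The main obstacle is to make the row lower bound rigorous when different columns feed the same row. For $N'>N$, row $j$ receives contributions from both $m_N$ and $m_{N'}$. These lie at different diagonal indices and may fall in the same dyadic block, so the row norms are not simply additive. I would avoid the issue by using only $\ell(2,1)\ge\sup$ over blocks. Alternatively, I would choose the columns sparse enough (say $m_N=2^{2^N}$) and place the entries so that, in the first $2^N$ rows below $m_N$, each row's contribution from $m_N$ is isolated in its own dyadic block. The simplest check is the lower bound $\|R_j\|_{\ell(2,1)}\ge\|R_j\|_{\ell_\infty}$. Summed over the distinct rows $j=m_N-k$, this already gives at least $\sum_N c_N\sum_k 1/k$, provided the row sets for different $N$ are disjoint. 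I would force that disjointness by taking the $m_N$ rapidly increasing and using only indices $k\le m_N-m_{N-1}-1$. In the final step I combine (i) and (ii): each of $\Bl^K\setminus\RR$ and $\Bl^K\setminus\CC$ is nonempty, so neither condition is necessary.
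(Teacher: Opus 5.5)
Your proposal is correct and follows the paper's proof: (i) is the same direct row/column computation, and your repaired construction in (ii) matches the paper's $A^{(3)}$. That construction puts one column per block carrying entries $c_N/k$ with $c_N=N^{-2}$, on pairwise disjoint row ranges so that each row holds a single entry; the paper's version uses column $2^m$, $1\le k\le 2^{m-1}$ and $\gamma_m=m^{-2}$. Incidentally, your first overlapping version with $m_N=2^N+1$ already works via the $\ell_\infty$ lower bound: charge each row $j\in(2^{N-1},2^N]$ only to its entry from column $m_N$, which gives $\sum_N c_N\sum_{k\le 2^{N-1}}1/k=\infty$.
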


\begin{proof}
(i) The only nonzero row of $A^{(2)}$  is 
\[
 R_1(A^{(2)})=(1/k)_{k\ge1},
\]
so 
\[
\|A^{(2)}\|_{\RR}<\infty
\]
by \eqref{eq:harmonic}. Its columns contains only one nonzero entry column $1+k$ contains the single entry $1/k$ so
\[
\|A^{(2)}\|_{\CC}=\sum_{k\ge1}1/k=\infty.
\]
Since $A^{(2)}\in\Bl^K$ by Theorem~\ref{thm:dichotomy}(iii), it follows that $\Bl^K \not\subset \CC$.

(ii) For $m\ge2$ put $\gamma_m=m^{-2}$ and let $A^{(3)}$ having the
entries
\[
(A^{(3)})_k^{\,2^m-k}=\frac{\gamma_m}{k},
\qquad1\le k\le2^{m-1},\quad m\ge2,
\]
all other entries being zero; the $m$-th block of entries occupies the single
column $2^m$ and the rows $2^m-k\in[2^{m-1},2^m-1]$, so distinct
blocks occupy disjoint sets of rows. 

By \eqref{eq:harmonic} we have
\[ 
\|C_{2^m}(A^{(3)})\|_{\ell(2,1)}\lsim\gamma_m,
\]
and hence
\[
\|A^{(3)}\|_{\CC}\lsim\sum_m m^{-2}<\infty.
\]
By
Theorem~\ref{thm:sufficient}, $A^{(3)}\in\Bl^K$. On the other hand
each nonzero row of $A^{(3)}$ contains exactly one entry, so
\[
\|A^{(3)}\|_{\RR}
=\sum_{m\ge2}\gamma_m\sum_{k=1}^{2^{m-1}}\frac1k
\ \ge\ (\ln2)\sum_{m\ge2}\frac{m-1}{m^2}=\infty. \qedhere
\]
\end{proof}

\section{Toeplitz tests and the necessary condition}
\label{sec:necessary}

In this section we prove Theorem~C and deduce the right-hand half of
Theorem~B. The main analytical tool used here is the theorem of de~Leeuw, Kahane and
Katznelson \cite{dLKK}: \emph{there is an absolute constant
$\kappa_0$ such that for every $(x_k)_{k\in\Z}\in\ell_2(\Z)$ there
exists $F\in C(\T)$ with $|\widehat F(k)|\ge|x_k|$ for all $k$ and
$\|F\|_\infty\le\kappa_0\|x\|_{\ell_2}$}; see \cite{Kis} for analytic
versions.

We first record a Bernstein--Schwarz estimate for lacunary sums of
band-limited polynomials. This is a quantitative form of the classical
fact that an analytic function whose smooth dyadic blocks are uniformly
bounded in $H^\infty$ belongs to the Bloch space (see, e.g.,
\cite{Pav}); a matricial analogue for lacunary diagonals is
\cite[Theorem~6]{Po}. Similarly, Lemma~\ref{lem:majorant} below is
essentially known: it follows from \cite{dLKK} by standard
band-limiting, and analytic majorant theorems of this type go back to
\cite{Kis}. We include the short proofs to fix the band geometry and
the constants used in Theorem~\ref{thm:toeplitzdual}.

\begin{lemma}[block polynomials]\label{lem:blocks}
Let $N\ge0$ and let $h_0,\dots,h_N$ be analytic polynomials with
$\max_n\|h_n\|_{H^\infty}\le M$, such that the spectrum of $h_n$ is
contained in $[2^{n-2},2^{n+2})$ for $n\ge2$ and in $[0,4)$ for
$n\in\{0,1\}$. Then
\[
\Bigl\|\sum_{n=0}^Nh_n\Bigr\|_{\mathcal{B}}\ \lsim\ M .
\]
\end{lemma}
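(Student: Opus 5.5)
Write $f=\sum_{n=0}^N h_n$, which is a polynomial, so $f\in\mathcal{B}$ trivially; only the bound on $\|f\|_{\mathcal{B}}$ matters. The plan is to bound $|f(0)|$ and $\sup_{|z|<1}(1-|z|^2)|f'(z)|$ separately. The tools are Bernstein's inequality for each band-limited block and the geometric decay of the factor $|z|^{k}$ on the bands far above the scale $1/(1-|z|)$.

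\emph{The value at $0$.} Only $h_0,h_1$ (spectrum in $[0,4)$) and possibly $h_2$ (spectrum in $[1,16)$, so $h_2(0)=0$) can contribute. For $n\ge2$ we have $h_n(0)=0$ because $2^{n-2}\ge1$. Hence $|f(0)|\le|h_0(0)|+|h_1(0)|\le2M$.

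\emph{The derivative.} Fix $z$ with $|z|=r$ and assume $r\ge\tfrac12$; the case $r<\tfrac12$ is handled by the same estimates with $\rho=1$. Choose the integer $\nu\ge1$ with $2^{-\nu}<1-r\le2^{-\nu+1}$. For each $n$, Bernstein's inequality gives $\|h_n'\|_{H^\infty}\le 2^{n+2}M$, since $h_n$ has degree $<2^{n+2}$. The plan is to split the sum of derivatives into low and high blocks.

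\emph{Low blocks, $n\le\nu$.} Here we simply use
\[
(1-r^2)\sum_{n\le\nu}|h_n'(z)|\le 2(1-r)\sum_{n\le\nu}2^{n+2}M\lsim 2^{-\nu}2^{\nu}M=M .
\]

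\emph{High blocks, $n>\nu$ (so $n\ge2$).} Write $h_n(z)=z^{2^{n-2}}g_n(z)$, where $g_n$ is a polynomial of degree $<2^{n+2}-2^{n-2}$ and $\|g_n\|_\infty=\|h_n\|_\infty\le M$ on the circle. Then
\[
h_n'(z)=2^{n-2}z^{2^{n-2}-1}g_n(z)+z^{2^{n-2}}g_n'(z).
\]
Bernstein's inequality applied to $g_n$ gives $|h_n'(z)|\lsim 2^nM\,r^{2^{n-2}-1}$. By the inequality $\ln(1-x)\le -x$ we get $r^{2^{n-2}}\le \exp(-2^{n-2}2^{-\nu})=\exp(-2^{n-\nu-2})$, and $r^{-1}\le2$. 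Therefore
\[
(1-r^2)\sum_{n>\nu}|h_n'(z)|\lsim M\sum_{n>\nu}2^{n-\nu}\exp(-2^{n-\nu-2})\lsim M,
\]
because the sum over $t=n-\nu\ge1$ of $2^t e^{-2^{t-2}}$ converges to an absolute constant.

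Combining the two ranges gives $(1-|z|^2)|f'(z)|\lsim M$ uniformly in $z$, and together with the bound on $|f(0)|$ this yields $\|f\|_{\mathcal{B}}\lsim M$.

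The main obstacle is the factorization step for the high blocks. A naive Bernstein bound of size $2^n$ is not summable there, so the whole estimate depends on using the lower spectral edge $2^{n-2}$ to extract the factor $r^{2^{n-2}}$, which decays super-geometrically once $n>\nu$. This is exactly where the hypothesis that the bands have comparable lower and upper edges is used.
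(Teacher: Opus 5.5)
Your argument is correct, and it differs from the paper's mainly in how the blocks are summed.

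For the pointwise estimate, you factor $h_n=z^{2^{n-2}}g_n$ and apply Bernstein's inequality to $g_n$. The paper instead applies Bernstein to $h_n$ and then the Schwarz lemma to $h_n'$, which vanishes to order $2^{n-2}-1$ at the origin. Both yield $\max_{|w|=r}|h_n'(w)|\lsim 2^nM\,r^{2^{n-2}-1}$, so this part is essentially the same idea.

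The real difference is the summation. You split at the critical scale $2^{\nu}\approx 1/(1-r)$. On the low blocks you use the crude Bernstein bound, and on the high blocks the super-exponential damping $\exp(-2^{n-\nu-2})$. The paper sums all blocks $n\ge2$ at once via the counting inequality $\sum_{j\ge0}2^jr^{2^j}\le 2r/(1-r)$. That inequality absorbs the factor $r^{-1}$ and gives the explicit bound $32M/(1-r)$ with no case distinction in $r$.

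Your decomposition is the more standard one. It shows which blocks matter at radius $r$, and it adapts easily to other band geometries. The paper's route is shorter and gives clean constants. Once you have the per-block bound, your split is in fact unnecessary.

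One point to tidy: your remark on $r<\tfrac12$ refers to an undefined $\rho$, and the step $r^{-1}\le2$ in the high-block estimate fails there. This is harmless, because for $r<\tfrac12$ the per-block bound directly gives $\sum_{n\ge2}2^nM\,r^{2^{n-2}-1}\le\sum_{n\ge2}2^{n+1}2^{-2^{n-2}}M\lsim M$. Alternatively, $\ln r\le-(1-r)$ gives $r^{2^{n-2}-1}\le\e\,\exp(-2^{n-\nu-2})$ for all $r\in[0,1)$, with $\nu=1$ when $r<\tfrac12$. Either way, this case should be stated explicitly rather than left implicit.
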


\begin{proof}
Write $f=\sum_nh_n$; then $|f(0)|\le2M$. By Bernstein's inequality
\[
\|h_n'\|_{H^\infty}\le2^{n+2}M,
\]
and for $n\ge2$ the polynomial
$h_n'$ vanishes at the origin to order at least $2^{n-2}-1$, so the
Schwarz lemma gives
$\max_{|w|=r}|h_n'(w)|\le2^{n+2}M\,r^{2^{n-2}-1}$ for $0\le r<1$.
Next, for $0<r<1$,
\begin{equation}\label{eq:dyadicsum}
\sum_{j\ge0}2^jr^{2^j}\ \le\ r+2\sum_{i\ge2}r^i\ \le\ \frac{2r}{1-r},
\end{equation}
since for $j\ge1$ the block $(2^{j-1},2^j]$ contains $2^{j-1}$
integers $i$ with $r^i\ge r^{2^j}$, and these blocks are pairwise
disjoint. Substituting $j=n-2$,
\[
\sum_{n\ge2}\max_{|w|=r}|h_n'(w)|
\le16M\,r^{-1}\sum_{j\ge0}2^jr^{2^j}
\le\frac{32M}{1-r},
\]
while $\max_{|w|=r}|h_n'(w)|\le3M$ for $n\in\{0,1\}$. 

Hence
\[
(1-r^2)\max_{|w|=r}|f'(w)|\lsim M
\]
uniformly in $r$, and the claim
follows.
\end{proof}

\begin{lemma}[analytic majorants]\label{lem:majorant}
For every $n\ge0$ and all nonnegative reals $(t_k)_{k\in I_n}$ there
exists an analytic polynomial $g$ such that
\[
|\widehat g(k)|\ge t_k\ \ (k\in I_n),\qquad
\|g\|_{H^\infty}\lsim\Bigl(\sum_{k\in I_n}t_k^2\Bigr)^{1/2},
\]
and the spectrum of $g$ is contained in $[2^{n-2},2^{n+2})$ if
$n\ge2$, and in $I_n\subset[0,4)$ if $n\in\{0,1\}$.
\end{lemma}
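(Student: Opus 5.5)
The plan is to apply the de~Leeuw--Kahane--Katznelson theorem to the finite sequence $x_k=t_k$ for $k\in I_n$ and $x_k=0$ otherwise. This gives $F\in C(\T)$ with $|\widehat F(k)|\ge t_k$ on $I_n$ and $\|F\|_\infty\le\kappa_0\|t\|_{\ell_2}$. We then band-limit $F$ by convolving with a de~la Vallée Poussin type kernel $V$. This kernel is a trigonometric polynomial with $\widehat V(k)=1$ on $I_n$, spectrum in the prescribed window, and $\|V\|_{L^1(\T)}\le C$ uniformly in $n$. We set $g:=F*V$. Then $\widehat g(k)=\widehat F(k)\widehat V(k)$, so $|\widehat g(k)|\ge t_k$ on $I_n$ and $\|g\|_\infty\le\|V\|_{L^1}\|F\|_\infty\lsim\|t\|_{\ell_2}$. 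Since the spectrum of $V$ lies in $[0,\infty)$, $g$ is an analytic polynomial and its $H^\infty$ norm equals its sup norm on $\T$.

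The kernel is built as follows. For $n\ge2$ let $c=3\cdot2^{n-1}$ be the centre of $I_n=[2^n,2^{n+1})$, whose half-width is $2^{n-1}$. Take the symmetric de~la Vallée Poussin kernel $W_L=2K_{2L}-K_L$, where $K_L$ is the Fejér kernel of order $L$. We have $\widehat{W_L}(k)=1$ for $|k|\le L$, $\widehat{W_L}(k)=0$ for $|k|\ge 2L$, and $\|W_L\|_{L^1}\le3$. Choose $L=2^{n-1}$ and modulate, $V(\theta)=\e^{ic\theta}W_L(\theta)$. Then $\widehat V(k)=1$ for $|k-c|\le2^{n-1}$, which covers $I_n$. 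The spectrum of $V$ lies in $(c-2^n,c+2^n)=(2^{n-1},5\cdot2^{n-1})$, which is contained in $[2^{n-2},2^{n+2})$, and modulation does not change the $L^1$ norm. Because this window is wider than needed, the integer endpoint bookkeeping is harmless.

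For $n\in\{0,1\}$ no band-limiting machinery is needed, since $I_n$ has at most two elements. We take $g(z)=\sum_{k\in I_n}t_kz^k$, which satisfies $\|g\|_{H^\infty}\le\sum t_k\le\sqrt2\,\|t\|_{\ell_2}$, and its spectrum lies in $I_n\subset[0,4)$ as required.

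The only genuine input is the dLKK theorem, which we assume. The main point requiring care is that the $L^1$ norms of the band-limiting kernels stay bounded independently of $n$; this is exactly why a de~la Vallée Poussin kernel is used instead of a Dirichlet kernel. One must also check that the transition band of the kernel fits inside the prescribed spectral window $[2^{n-2},2^{n+2})$. The choices of $c$ and $L$ above handle both points, with room to spare.
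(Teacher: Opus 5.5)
Your proposal is correct and follows the paper's proof essentially verbatim. Both use the trivial polynomial $\sum_{k\in I_n}t_kz^k$ for $n\in\{0,1\}$, and for $n\ge2$ convolve the de~Leeuw--Kahane--Katznelson function with a de~la~Vall\'ee~Poussin kernel of half-width $2^{n-1}$, modulated to the centre $3\cdot2^{n-1}$ of $I_n$, with the uniform bound $\|V\|_{L^1}\le3$. The only difference is kernel indexing: the paper writes $2K_{2N+1}-K_N$ under the Fej\'er convention $\widehat{K_M}(j)=1-|j|/(M+1)$, while your $2K_{2L}-K_L$ presupposes $\widehat{K_M}(j)=1-|j|/M$, which is immaterial given the slack in the window $[2^{n-2},2^{n+2})$.
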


\begin{proof}
Normalize $\sum_{k\in I_n}t_k^2=1$. For $n\in\{0,1\}$ take
\[
g(z)=\sum_{k\in I_n}t_kz^k,
\]
so that
\[
\|g\|_{H^\infty}\le\sqrt2.
\]

Let $n\ge2$ and set $N:=2^{n-1}$. Extend $(t_k)$ by zero to $\Z$ and
let $F\in C(\T)$ be given by the de~Leeuw--Kahane--Katznelson theorem,
so that 
\[
\|F\|_\infty\le\kappa_0 \text{ and } |\widehat F(k)|\ge t_k
\]
for
$k\in I_n$. Let $K_M$ be the Fej\'er kernel and let
\[
V:=2K_{2N+1}-K_N
\]
be the de~la~Vall\'ee~Poussin kernel, so that
\[
\widehat V(j)=1 \text{ for } |j|\le N,\quad \widehat V(j)=0
\]
for $|j|\ge2N+2$,
and $\|V\|_{L^1}\le3$.

We define $W(\theta):=\e^{3Ni\theta}V(\theta)$ and
$g:=F*W$. Then $\widehat g=\widehat F\,\widehat W$, so $g$ is a
trigonometric polynomial with spectrum in
\[
[3N-2N-1,\ 3N+2N+1]=[2^{n-1}-1,\ 5\cdot2^{n-1}+1]
\subset[2^{n-2},\ 2^{n+2}),
\]
the inclusion holding for $n\ge2$; in particular all frequencies are
positive and $g$ is an analytic polynomial.

For $k\in I_n$ we have
$|k-3N|\le N$, hence $\widehat W(k)=1$ and
$|\widehat g(k)|=|\widehat F(k)|\ge t_k$. Finally
\[
\|g\|_{H^\infty}\le\|F\|_\infty\|W\|_{L^1}\le3\kappa_0.
\]
\end{proof}

We can now prove Theorem~C. Recall the notation
$S_n(A):=\bigl(\sum_{k\in I_n}\|A_k\|_{S_1}^2\bigr)^{1/2}$, so that
$\|A\|_{\MS}=\sum_nS_n(A)$, and
\[
\sigma(A)=\sup\Bigl\{\sum_{k\ge0}\|A_k\|_{S_1}|\widehat f(k)|:\
\|f\|_{\mathcal{B}}\le1\Bigr\}.
\]

\begin{theorem}[Toeplitz tests characterize $\MS$]
\label{thm:toeplitzdual}
For an upper triangular matrix $A$ the following are equivalent:
\begin{itemize}
\item[\rm(a)] $\sum_{j,m}|a_{j,m}b_{j,m}|<\infty$ for every Toeplitz
matrix $B\in\Bl$;
\item[\rm(b)] $A\in\MS$.
\end{itemize}
Moreover $\sigma(A)\esim\|A\|_{\MS}$.
\end{theorem}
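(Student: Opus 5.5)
The plan has three parts: reduce the pairing with a Toeplitz test to a scalar one, prove the upper bound $\sigma(A)\lsim\|A\|_{\MS}$ from Lemma~\ref{lem:coeff}, and prove the lower bound by building one explicit Bloch test function from Lemmas~\ref{lem:majorant} and~\ref{lem:blocks}.

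\textbf{Reduction to a scalar pairing.} For a Toeplitz matrix $B=T(f)$ every entry of the $k$-th diagonal equals $\widehat f(k)$, so
\[
\sum_{j,m}|a_{j,m}b_{j,m}|=\sum_{k\ge0}\|A_k\|_{S_1}|\widehat f(k)|.
\]
By Lemma~\ref{lem:toeplitz}(ii), the Toeplitz matrices in $\Bl$ are exactly the $T(f)$ with $f\in\mathcal{B}$, and $\|T(f)\|_{\Bl}=\|f\|_{\mathcal{B}}$. Hence (a) says that the sequence $x_k:=\|A_k\|_{S_1}$ lies in the scalar K\"othe dual $\mathcal{B}^K$. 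The quantity $\sigma(A)$ is the corresponding dual norm. It remains to show $\sigma(A)\esim\sum_nS_n(A)$, and to show that (a) forces $\sigma(A)<\infty$.

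\textbf{Upper bound, (b)$\Rightarrow$(a).} Apply the row estimate of Lemma~\ref{lem:coeff}(ii) to $T(f)$ with $j=1$. This gives $(\sum_{k\in I_n}|\widehat f(k)|^2)^{1/2}\lsim\|f\|_{\mathcal{B}}$ uniformly in $n$. Cauchy--Schwarz on each dyadic block then yields
\[
\sum_k x_k|\widehat f(k)|\le\sum_n S_n(A)\Bigl(\sum_{k\in I_n}|\widehat f(k)|^2\Bigr)^{1/2}\lsim\|A\|_{\MS}\|f\|_{\mathcal{B}},
\]
so $\sigma(A)\lsim\|A\|_{\MS}$.

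\textbf{Lower bound.} Fix $N$ and handle even and odd $n\le N$ separately, so that the bands $[2^{n-2},2^{n+2})$ overlap only boundedly. Group $n$ modulo $4$ to be safe; this splits the sum into four lacunary subfamilies.
\begin{itemize}
\item For each $n$ with $S_n(A)>0$, apply Lemma~\ref{lem:majorant} with $t_k=x_k/S_n(A)$. This gives a polynomial $g_n$ with $|\widehat g_n(k)|\ge x_k/S_n(A)$ on $I_n$ and $\|g_n\|_{H^\infty}\lsim1$.
\item Bands of indices in the same residue class modulo $4$ are disjoint. So if $f=\sum_{n\equiv i}g_n$, then $\widehat f(k)=\widehat g_n(k)$ for $k\in I_n$ with $n\equiv i$.
\item By Lemma~\ref{lem:blocks}, $\|f\|_{\mathcal{B}}\lsim1$. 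Its hypotheses require spectrum in $[2^{n-2},2^{n+2})$ and a uniform $H^\infty$ bound; we may set the $h_n$ with $n\not\equiv i$ to zero.
\item Then $\sigma(A)\gtrsim\sum_{n\le N,\,n\equiv i}\sum_{k\in I_n}x_k^2/S_n(A)=\sum_{n\le N,\,n\equiv i}S_n(A)$. Summing over the four residues and letting $N\to\infty$ gives $\|A\|_{\MS}\lsim\sigma(A)$.
\end{itemize}

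\textbf{(a)$\Rightarrow$(b) and the main obstacle.} The remaining gap, and the step I expect to be delicate, is getting $\sigma(A)<\infty$ from (a). I would use the closed graph theorem (or uniform boundedness) on the map $f\mapsto(x_k\widehat f(k))_k\in\ell_1$. This map is defined on all of $\mathcal{B}$ by (a), and it has closed graph because coefficient functionals are continuous on $\mathcal{B}$ (Lemma~\ref{lem:coeff}(i)). It is therefore bounded, which gives $\sigma(A)<\infty$, and the lower bound yields (b).

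Alternatively, I would argue directly by contradiction. If $\sum S_n=\infty$, choose blocks with $\sum_{\text{chosen}}S_n=\infty$ in one residue class, and form $f=\sum g_n$ with coefficients $c_n$ chosen so that $\sum c_nS_n=\infty$ while $\sup c_n<\infty$. Lemma~\ref{lem:blocks} requires polynomials and finite $N$, so the infinite sum must be handled by taking limits in the Bloch norm locally uniformly; the bound is uniform in $N$, so this passes to the limit.
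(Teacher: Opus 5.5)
Your proposal is correct and follows the paper's proof essentially step for step. The common steps are: the scalar reduction via the Toeplitz pairing; the upper bound from the row-$1$ estimate of Lemma~\ref{lem:coeff}(ii) combined with blockwise Cauchy--Schwarz; the closed graph theorem to get $\sigma(A)<\infty$; and the lower bound via the majorants of Lemma~\ref{lem:majorant}, grouped modulo $4$ and summed with Lemma~\ref{lem:blocks}. The only detail you leave implicit is one the paper checks first: testing (a) on $T(z^{k})$, where $\|z^k\|_{\mathcal{B}}\le2$, forces each $\|A_k\|_{S_1}<\infty$. This is what makes $f\mapsto(\|A_k\|_{S_1}\widehat f(k))_k$ a genuine linear map into $\ell_1$ and gives meaning to the normalization $t_k=\|A_k\|_{S_1}/S_n(A)$.
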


\begin{proof}
If $B=T(f)$ with $f=\sum_kc_kz^k$, then, the diagonals of $B$ being
constant,
\begin{equation}\label{eq:toeplitzpairing}
\sum_{j,m}|a_{j,m}b_{j,m}|
=\sum_{k\ge0}\|A_k\|_{S_1}\,|\widehat f(k)|.
\end{equation}

(b)$\Rightarrow$(a), and $\sigma(A)\lsim\|A\|_{\MS}$. Let
$f\in\mathcal{B}$; by Lemma~\ref{lem:toeplitz}(ii), $T(f)\in\Bl$ with
$\|T(f)\|_{\Bl}=\|f\|_{\mathcal{B}}$, and the first row of $T(f)$ is
$
(\widehat f(k))_{k\ge0}.
$
Lemma~\ref{lem:coeff}(ii) therefore gives
\[
\bigl(\sum_{k\in I_n}|\widehat
f(k)|^2\bigr)^{1/2}\lsim\|f\|_{\mathcal{B}}
\]
uniformly in $n$, and by
blockwise Cauchy--Schwarz in \eqref{eq:toeplitzpairing},
\[
\sum_{k\ge0}\|A_k\|_{S_1}|\widehat f(k)|
\le\sum_{n\ge0}S_n(A)\Bigl(\sum_{k\in I_n}|\widehat f(k)|^2\Bigr)^{1/2}
\lsim\|A\|_{\MS}\,\|f\|_{\mathcal{B}}.
\]

(a)$\Rightarrow$(b), and $\|A\|_{\MS}\lsim\sigma(A)$.

For $k_0\ge0$ we have 
\[
\|z^{k_0}\|_{\mathcal{B}}\le2
\]
by
\eqref{eq:supbound}, and \eqref{eq:toeplitzpairing} applied to
$T(z^{k_0})$ gives 
\[
\|A_{k_0}\|_{S_1}<\infty.
\] 

Consider the linear map
$\Lambda_A:\mathcal{B}\to\ell_1$,
\[
\Lambda_Af:=(\|A_k\|_{S_1}\widehat f(k))_{k\ge0},
\]
well defined by
(a) and \eqref{eq:toeplitzpairing}. The scalar Bloch space is a
Banach space \cite{AS,Pav}, and the coefficient functionals are
continuous on it. By Lemmas~\ref{lem:toeplitz}(ii)
and~\ref{lem:coeff}(i),
\[
|\widehat h(k)|=\|T(h)_k\|_{B(\ell_2)}
\le\e\,\|T(h)\|_{\Bl}=\e\,\|h\|_{\mathcal{B}}
\qquad h\in\mathcal{B},\ k\ge0.
\]
Hence the graph of $\Lambda_A$ is closed, and the closed graph theorem
yields
\begin{equation}\label{eq:CA}
\sum_{k\ge0}\|A_k\|_{S_1}\,|\widehat f(k)|
\le\sigma(A)\,\|f\|_{\mathcal{B}}\qquad f\in\mathcal{B},
\qquad\sigma(A)<\infty .
\end{equation}

 Fix $N\ge0$. For each $0\le n\le N$
with $S_n(A)>0$ let us take
\[
t_k:=\|A_k\|_{S_1}/S_n(A)
\]
for $k\in I_n$. 

It follows 
that $\sum_{k\in I_n}t_k^2=1$, and let $g_n$ be the polynomial of
Lemma~\ref{lem:majorant}. If $S_n(A)=0$ we take $g_n:=0$. 

For $i\in\{0,1,2,3\}$ define the index sets
\[
E_i:=\{0\le n\le N:\ n\equiv i\ (\mathrm{mod}\ 4)\}.
\]
By construction, the polynomials $\{g_n\}_{n \in E_i}$ have pairwise disjoint spectra. Specifically, $\operatorname{supp}(\widehat{g_n}) \subset [2^{n-2}, 2^{n+2})$ for $n \ge 2$, yielding mutually disjoint supports whenever $|n - m| \ge 4$.

Additionally, $\operatorname{supp}(\widehat{g_0}) \subset \{0,1\}$ and $\operatorname{supp}(\widehat{g_1}) \subset \{2,3\}$, ensuring disjointness from the supports of all $g_n$ with $n \ge 4$.

Fix $i$ and set $f_i:=\sum_{n\in E_i}g_n$; by
Lemmas~\ref{lem:blocks} and~\ref{lem:majorant},
$\|f_i\|_{\mathcal{B}}\lsim1$. By the spectral disjointness within the
class, $\widehat{f_i}(k)=\widehat{g_n}(k)$ for $k\in I_n$, $n\in E_i$,
so \eqref{eq:CA} yields
\[
\sum_{n\in E_i}S_n(A)
=\sum_{n\in E_i}\sum_{k\in I_n}t_k\,\|A_k\|_{S_1}
\le\sum_{k\ge0}|\widehat{f_i}(k)|\,\|A_k\|_{S_1}
\le\sigma(A)\,\|f_i\|_{\mathcal{B}}\lsim\sigma(A).
\]
Summing over the four classes and letting $N\to\infty$ gives
$\|A\|_{\MS}\lsim\sigma(A)$.
\end{proof}

\begin{theorem}\label{thm:necessary}
$\Bl^K\subset\MS$, and the inclusion is strict.
\end{theorem}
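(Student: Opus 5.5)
The inclusion $\Bl^K\subset\MS$ will follow from Theorem~\ref{thm:toeplitzdual}. If $A\in\Bl^K$, then in particular $\sum_{j,m}|a_{j,m}b_{j,m}|<\infty$ for every $B\in\Bl$, and so for every Toeplitz matrix $B=T(f)\in\Bl$. This is condition (a) of Theorem~\ref{thm:toeplitzdual}, so (b) gives $A\in\MS$.

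To make the inclusion quantitative, I would run a closed graph argument on the map $B\mapsto A*B$ from $\Bl$ to $\Mone$. Its graph is closed because entries are continuous on $\Bl$ by Lemma~\ref{lem:coeff}(i), and this yields
\[
\sum_{j,m}|a_{j,m}b_{j,m}|\le C_A\|B\|_{\Bl}.
\]
Restricting to $B=T(f)$ and using Lemma~\ref{lem:toeplitz}(ii) gives $\sigma(A)\le C_A$. Then Theorem~\ref{thm:toeplitzdual} gives $\|A\|_{\MS}\lsim C_A$.

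Strictness is already available. The matrix $A^{(1)}$ of Theorem~\ref{thm:dichotomy} lies in $\MS$ by part (i) and does not lie in $\Bl^K$ by part (ii), so $A^{(1)}\in\MS\setminus\Bl^K$.

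There is no real obstacle here, since all the analytic work, namely the majorant construction inside Theorem~\ref{thm:toeplitzdual}, has already been done. The only point to check is that condition (a) is literally implied by membership in $\Bl^K$. It is, because Toeplitz matrices in $\Bl$ form a subset of $\Bl$ and the pairing in (a) is the same one that defines $\Bl^K$ in \eqref{eq:kothe}.
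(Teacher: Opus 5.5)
Your proposal is correct and matches the paper's proof: membership in $\Bl^K$ gives condition (a) of Theorem~\ref{thm:toeplitzdual}, hence $A\in\MS$, and the matrix $A^{(1)}$ of Theorem~\ref{thm:dichotomy} shows the inclusion is strict. Your added closed-graph step for $B\mapsto A*B$ is also valid; it only sharpens the bound the paper records, $\|A\|_{\MS}\lsim\sigma(A)$, to $\|A\|_{\MS}\lsim C_A$, where $C_A$ is the K\"othe-dual norm.
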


\begin{proof}
Let $A\in\Bl^K$. Then 
\[
\sum_{j,m}|a_{j,m}b_{j,m}|<\infty
\] for every
$B\in\Bl$, and in particular for every Toeplitz matrix $B\in\Bl$. This
is condition (a) of Theorem~\ref{thm:toeplitzdual}, thus $A\in\MS$ and
\[
\|A\|_{\MS}\lsim\sigma(A).
\]
The inclusion is strict: by
Theorem~\ref{thm:dichotomy}, the matrix $A^{(1)}$ lies in $\MS$ but
not in $\Bl^K$.
\end{proof}

\begin{corollary}[two-sided estimate]\label{cor:sandwich}
\[
\RR+\CC\ \subset\ \Bl^K\ \subset\ \MS ,
\]
where the second inclusion is strict, and each of $\RR$, $\CC$ alone
is a proper subset of $\Bl^K$.
\end{corollary}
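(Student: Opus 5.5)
The corollary only assembles results already proved, so the plan is to collect them in order. For the left-hand inclusion $\RR+\CC\subset\Bl^K$ I will invoke Theorem~\ref{thm:sufficient}. It gives $\sum_{j,m}|a_{j,m}b_{j,m}|\lsim\|A\|_{\RR}\|B\|_{\Bl}$ for $A\in\RR$, and likewise with $\|A\|_{\CC}$ for $A\in\CC$. If $A=A'+A''$ with $A'\in\RR$ and $A''\in\CC$, the triangle inequality $|a_{j,m}b_{j,m}|\le|a'_{j,m}b_{j,m}|+|a''_{j,m}b_{j,m}|$ makes the pairing finite for every $B\in\Bl$, and yields the quantitative bound $\lsim(\|A'\|_{\RR}+\|A''\|_{\CC})\|B\|_{\Bl}$. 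Taking the infimum over decompositions gives the bound in terms of the sum-space norm.

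For the right-hand inclusion $\Bl^K\subset\MS$ and its strictness, I will cite Theorem~\ref{thm:necessary} directly. It rests on Theorem~\ref{thm:toeplitzdual}, since membership in $\Bl^K$ implies condition (a) there, because Toeplitz matrices in $\Bl$ are among the tests. Strictness comes from the matrix $A^{(1)}$ of Theorem~\ref{thm:dichotomy}: part (i) gives $A^{(1)}\in\MS$ and part (ii) gives $A^{(1)}\notin\Bl^K$.

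For properness of $\RR$ and $\CC$ separately, I will use Proposition~\ref{prop:incomparable}. $A^{(2)}\in\Bl^K\setminus\CC$ shows $\CC\subsetneq\Bl^K$, with $\CC\subset\Bl^K$ already known from the first step. $A^{(3)}\in\CC\setminus\RR$ lies in $\Bl^K$ by Theorem~\ref{thm:sufficient}, so $\RR\subsetneq\Bl^K$.

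I expect no real obstacle. The only point needing care is to note explicitly that $\RR+\CC$ is a linear space on which the Köthe pairing is subadditive; this is immediate from solidity and the triangle inequality.
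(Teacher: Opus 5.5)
Your proposal is correct and follows essentially the paper's proof, which just combines Theorem~\ref{thm:sufficient}, Theorem~\ref{thm:necessary} (strictness via $A^{(1)}$) and Proposition~\ref{prop:incomparable} ($A^{(2)}$ for $\CC$, $A^{(3)}$ for $\RR$). One minor point: closedness of $\Bl^K$ under addition, which gives $\RR+\CC\subset\Bl^K$, needs only the triangle inequality, not solidity.
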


\begin{proof}
The proof follows direct from  Theorems~\ref{thm:sufficient} and~\ref{thm:necessary} with
Proposition~\ref{prop:incomparable}.
\end{proof}

\begin{remark}\label{rem:notoeplitz}
No nonzero Toeplitz matrix can belong to $\Bl^K$. A nonzero Toeplitz matrix necessarily contains a diagonal that is a nonzero multiple of the identity, giving it an infinite trace norm and thereby failing the necessary condition of Theorem~\ref{thm:necessary}. Thus, the K\"othe dual of $\Bl$ contains exclusively matrices with strongly decaying diagonals, standing in contrast to $\Bl$ itself, which contains $\{T(f):f\in\mathcal{B}\}$ as an isometric Toeplitz copy of the scalar Bloch space.
\end{remark}

\begin{remark}\label{rem:scalarcomparison}
Theorem~\ref{thm:toeplitzdual} is the exact matricial analogue of the
Anderson--Shields identity \eqref{eq:AS}: the map
$A\mapsto(\|A_k\|_{S_1})_{k\ge0}$ identifies the Toeplitz-tested
K\"othe dual of $\Bl$ with the scalar multiplier space
$(\mathcal{B},\ell_1)=\ell(2,1)$. Theorem~\ref{thm:dichotomy} and
Corollary~\ref{cor:impossible} show that this description does not
survive the passage from Toeplitz tests to all of $\Bl$, and that no
repair within the class of diagonal descriptions is possible.
\end{remark}

\section{Open problems}\label{sec:problems}

\begin{problem}\label{prob:char}
Characterize $\Bl^K$ intrinsically. In particular, does
$\Bl^K=\RR+\CC$ hold, with equivalence of the natural norms? By
Corollary~\ref{cor:impossible}, any characterization must take into
account the positions of the entries within the diagonals.
\end{problem}

\begin{problem}\label{prob:other}
In \cite[Theorem~4.4]{MMPP} the K\"othe duals of $\mathcal{I}(\ell_2)$
and of $\Blz$ are identified with the largest solid subspaces
$s(\Bl)$ and $s(\mathcal{I}(\ell_2))$, respectively. Give explicit,
entrywise descriptions of these solid parts, and determine the K\"othe
dual of the Bergman--Schatten space $L_a^1(D,\ell_2)$ of
\cite{MPPP,PP}. Remark~\ref{rem:littlebloch} yields a negative result: $\MS\not\subset s(\mathcal{I}(\ell_2))$, so a trace-norm dyadic
description is ruled out for $s(\mathcal{I}(\ell_2))$ as well. We note
that K\"othe duality, being an order-theoretic notion, interacts with
the \emph{solid core} of a space rather than with its solid hull, a
distinction that must be respected when transferring scalar arguments.
\end{problem}

\begin{problem}\label{prob:quant}
Endowed with the norm
$\|A\|_{K}:=\sup\{\sum_{j,m}|a_{j,m}b_{j,m}|:\|B\|_{\Bl}\le1\}$, which
is finite on $\Bl^K$ by the closed graph theorem, is $\Bl^K$
isomorphic, as a Banach space, to a classical space of matrices or
sequences? What are the optimal constants in
Theorem~\ref{thm:sufficient} and Theorem~\ref{thm:toeplitzdual}?
\end{problem}

\end{document}